\DeclareMathOperator\dom{dom}
\DeclareMathOperator\supp{supp}
\DeclareMathOperator\lin{lin}
\newcommand*\D{\mathop{}\!\mathrm{d}}
\newcommand*\E{\mathop{}\!\mathrm{e}}
\newcommand*\I{\mathop{}\!\mathrm{i}}
\newtheorem{theorem}{Theorem}[section]
\newtheorem{lemma}[theorem]{Lemma}
\newtheorem{proposition}[theorem]{Proposition}
\newtheorem{corollary}[theorem]{Corollary}
\theoremstyle{plain}
\theoremstyle{definition}
\newtheorem{definition}[theorem]{Definition}
\newtheoremstyle{example}
  {.3\baselineskip}
  {.3\baselineskip}
  {\normalsize}  
  {0pt}       
  {\bfseries} 
  {.}         
  {5pt plus 1pt minus 1pt} 
  {}          
\theoremstyle{example}
\newtheorem*{assumption*}{\assumptionnumber}
\providecommand{\assumptionnumber}{}
\newenvironment{assumption}[2]
 {%
  \renewcommand{\assumptionnumber}{Assumption #1$\mathfrak{#2}$}%
  \begin{assumption*}%
  \protected@edef\@currentlabel{#1$\mathfrak{#2}$}%
 }
 {%
  \end{assumption*}
 }
\newtheoremstyle{remark}
  {.2\baselineskip}
  {.2\baselineskip}
  {\normalfont}
  {}
  {\bfseries}
  {\ifx\thmnote\@gobble.\else\normalfont.\fi}
  {.5em}
  {}
\theoremstyle{remark}
\newtheorem{remark}[theorem]{Remark}
\setlist[enumerate,1]{label=\roman*),ref=\roman*)}
\def\e{\operatorname{e}}
\newcommand{\opb}{\mathbf{b}}
\newcommand{\opA}{\mathbf{A}}
\newcommand{\opB}{\mathbf{B}}
\newcommand{\opJ}{\mathbf{J}}
\newcommand{\opW}{\mathbf{W}}
\newcommand{\opSigma}{\mathbf{\Sigma}}
\renewcommand{\MR}{\mathbb{R}}
\newcommand{\MC}{\mathbb{C}}
\newcommand{\ME}{\mathbb{E}}
\newcommand{\MN}{\mathbb{N}}
\newcommand{\MP}{\mathbb{P}}
\newcommand{\MF}{\mathbb{F}}
\newcommand{\R}{\MR}
\newcommand{\N}{\MN}
\newcommand{\cF}{\mathcal{F}}
\newcommand{\cA}{\mathcal{A}}
\newcommand{\cB}{\mathcal{B}}
\newcommand{\cH}{\mathcal{H}}
\newcommand{\cL}{\mathcal{L}}
\newcommand{\cR}{\mathcal{R}}
\newcommand{\cG}{\mathcal{G}}
\newcommand{\df}{\coloneqq}
\newcommand{\one}{\mathbf{1}}
\newcommand{\interior}[1]{({\kern0pt#1})^{\textnormal{o}}}
\newcommand{\set}[1]{\left\{ #1\right\}}
\newcommand{\norm}[1]{\|#1\|}
\newcommand{\llangle}{\langle\langle}
\newcommand{\rrangle}{\rangle\rangle}
\newcommand{\EX}[1]{\mathbb{E}\left[#1\right]}
\newcounter{Task}\setcounter{Task}{1}
\newcommand{\cHplus}{\cH^{+}}
\newcommand{\cHpluso}{\cHplus\setminus \{0\}}
\newcommand{\MRplus}{\MR^{+}}
\newcommand{\dm}{m(\D\xi)}
\newcommand{\dmu}{\mu(\D\xi)}
\newcommand{\sgc}{\begin{color}{red}}
\newcommand{\cgs}{\end{color}}
\newcommand{\sgcasma}{\begin{color}{green}}
\newcommand{\cgsasma}{\end{color}}
\newcommand{\sgcsven}{\begin{color}{blue}}
\newcommand{\cgssven}{\end{color}}
\begin{document}
\title[]{An infinite-dimensional affine stochastic volatility model}  
\author{Sonja Cox, Sven Karbach, Asma Khedher}

\thanks{This research is partially funded by The Dutch Research Council
  (NWO) (Grant No: C.2327.0099)}
\begin{abstract}
We introduce a flexible and tractable infinite-dimensional stochastic
volatility model. More specifically, we consider a Hilbert space valued
Ornstein--Uhlenbeck-type process, whos instantaneous covariance is given by a pure-jump
stochastic process taking values in the cone of positive self-adjoint
Hilbert-Schmidt operators. The tractability of our model lies in the fact that
the two processes involved are jointly \emph{affine}, i.e., we show that their
characteristic function can be given explicitely in terms of
the solutions to a set of generalised Riccati equations. The flexibility lies in the fact that we allow
multiple modeling options for the instantaneous covariance process, including state-dependent
jump intensity.\par 
  Infinite dimensional volatility models arise e.g.\ when considering the dynamics of forward
  rate functions in the Heath-Jarrow-Morton-Musiela modeling framework using the Filipovi\'c space. In this setting we
  discuss various examples: an infinite-dimensional version of the
  Barndorff-Nielsen--Shephard stochastic volatility model, as well as a model involving self-exciting volatility.
%
\end{abstract}

\keywords{Stochastic volatility, infinite-dimensional affine
  processes, Heath-Jarrow-Morton-Musiela framework, forward price dynamics, Riccati equations,
  state-dependent jump intensity}
\maketitle
\section{Introduction}
In this paper we propose a new class of \emph{affine} stochastic volatility
models $(Y_t,X_t)_{t\geq 0}$, where $(Y_t)_{t\geq 0}$ takes values in a real
separable Hilbert space $(H, \langle \cdot, \cdot \rangle_H)$ and
$(X_t)_{t\geq 0}$ is a time-homogeneous \emph{affine} Markov process taking
values in $\cH^+ = \cL_2^+(H)$, the
cone of positive self-adjoint Hilbert-Schmidt operators on $H$. The process
$X$ is taken from a class of affine processes introduced in~\cite{CKK20}. The process $(Y_t)_{t\geq 0}$ is modeled by the following stochastic differential equation 
\begin{align}\label{eq:stochastic-vola-model}
  \D Y_{t}=\cA Y_{t}\,\D t+ X_t^{1/2} \, \D W^{Q}_{t},  \quad t\geq 0, \quad Y_0=y \in H\,,
\end{align}
where $\cA\colon\dom(\cA)\subseteq H\to H$ is a possibly unbounded operator
with dense domain $\dom(\cA)$ and $(W^{Q}_{t})_{t\geq 0}$ is a $Q$-Brownian
motion independent of $X$, with $Q$ a positive self-adjoint trace-class operator on $H$. Assuming that $X$ is progressively measurable and using moment bounds on $X$ established in~\cite{CKK20}, the existence of a solution to~\eqref{eq:stochastic-vola-model} is straightforward (see Lemma~\ref{lem:integrand} below).\par 

In Section~\ref{sec:affine-vari-proc} we show that under the assumption that
the Markov process $(X_t)_{t\geq 0}$ has c\`adl\`ag paths, it is a
square-integrable semimartingale. This follows from the formulation of an associated martingale
problem in terms of what we call {\it a weak} generator (see Definition
\ref{def:weak-generator}) of the Markov process $(X_t)_{t\geq 0}$ and yields
the explicit representation of $(X_t)_{t\geq 0}$ as
 \begin{align}\label{eq:canonical-rep-X-intro}
X_{t}=x+\int_{0}^{t}\Big(b+B(X_{s})+\int_{\cHplus\cap \set{
  \norm{\xi}> 1}}\xi\,M(X_{s},\D\xi)\Big) \D s 
  + J_{t},\quad t\geq 0,  
 \end{align}
where $x, b\in\cHplus$, $B \in \cL(\cH)$ is a bounded linear operator, given $y\in \cH^+$ the measure $M(y,\cdot)\colon \cB(\cHpluso) \rightarrow \R$ is such that $\nu^{X}(\D t,\D\xi)=M(X_{t},\D\xi)\D t$
is the predictable compensator of the jump-measure of $(X_t)_{t\geq
  0}$, and $(J_t)_{t\geq 0}$ is a purely discontinuous
$\cHplus$-valued square integrable martingale. Moreover, by exploiting the
results in \cite{CKK20} and \cite{Me82}, we adapt the proof of \cite[Theorem
II.2.42]{JS03} to our infinite-dimensional setting to obtain the
characteristic triplet (see
Definition~\ref{def:semimartingale-characteristics}) of $(X_t)_{t\geq 0}$
explicitly and show its affine form (see Proposition~\ref{prop:affine-semimartingale}). The detailed parameter
specifications are given in Assumption~\ref{def:admissibility} below.\par 

Our main motivation for studying Hilbert space-valued stochastic volatility models
is the modeling of forward prices in
commodity or fixed-income markets under the Heath-Jarrow-Morton-Musiela (HJMM) modeling paradigm
(see for example, \cite{BK14, BK15, Fil01, CT06}). 
In finite dimensions, multivariate stochastic volatility models with state dependent volatility dynamics driven by Brownian noise and jumps are considered for example in 
\cite{gourieroux2010derivative, caversaccio2014pricing,
  leippold2008asset}. The variance process $X$ that we consider generalises the L\'evy
driven case considered in~\cite{BRS15} to a model allowing for state-dependent jump
intensities, while maintaining the desired {\it affine} property which makes these models tractable.
Stochastic volatilities with jumps describe the financial time series in
energy and fixed-income markets well, as it is illustrated, e.g., in
\cite{eydeland2002energy, benth2012modeling, cont2001empirical,
  leippold2008asset}. We refer in particular to \cite{leippold2008asset} in which the authors discussed convincing empirical evidence for state dependent-jumps in the volatility.

Our {\it main contribution} lies in showing that our stochastic volatility
model $(Y,X)$ has the affine property,
that is, we prove for all $t\geq 0$ that the mixed Fourier-Laplace transform
of $(Y_t, X_t)$ is exponentially affine in the initial value $(y,x) \in H\times \cHplus$ and has a 
quasi-explicit formula in terms 
of a solution to generalised Riccati equations that are written in terms of
the parameters of the model, see Theorem \ref{thm:joint_process_affine} below. 
For more on affine processes in various \emph{finite dimensional} state
spaces, see, e.g.,~\cite{cuchiero2011affine, DFS03, KST13, KM15,
  spreij2011affine, kallsen2010exponentially, CFMT11}. In particular,~\cite{CFMT11} considers affine processes in the space of positive self-adjoint matrices, i.e., they consider the finite-dimensional analogue of our variance process $X$. Infinite-dimensional affine stochastic processes have been considered in e.g.~\cite{STY20, Gra16, CT20, BRS15, benth2018heston, benth2021barndorff}. In particular,~\cite{BRS15, benth2018heston, benth2021barndorff} consider infinite-dimensional affine volatility models, however, they do not include state-dependent jump intensities.

The proof Theorem~\ref{thm:joint_process_affine}, i.e., of the affine property
of our stochastic volatility model $(Y_t,X_t)_{t\geq 0}$, is in
Section~\ref{sec:affine-property}. It involves considering an approximation $(Y_t^{(n)}, X_t)_{t\geq 0}$ of $(Y_t, X_t)_{t\geq 0}$ obtained by replacing $\cA$ in \eqref{eq:stochastic-vola-model} by its Yosida approximation. The use of the approximation allows us to exploit the semimartingale theory and standard techniques in order to show that the approximating process is affine. 
To show that the affine property holds for the limiting process $(Y_t,
X_t)_{t\geq 0}$, we study the convergence of the generalised Riccati equations associated
with $(Y_t^{(n)}, X_t)_{t\geq 0}$ to those associated with $(Y_t,X_t)_{t\geq
  0}$. We prove the existence of a unique
solution to these generalised Ricatti equations by exploiting infinite
dimensional ODE results and using the quasi-monotonicity argument to show that
the solution stays in the cone $\cHplus$, see \cite{Dei77} and \cite{Mar76}.
In order for the approach described above to succeed,
we impose a commutativity-type condition on the covariance operator of the $Q$-Wiener process $(W^{Q}_t)_{t\geq
  0}$ and the stochastic volatility $(X_t^{1/2})_{t\geq 0}$ (see
Assumption~\ref{def:joint-assumption} below). This condition is also imposed
in \cite{BRS15} and is rather limiting. However, we show that it can be avoided by considering a slightly different stochastic volatility model, see Remark \ref{rem:joint_model_alt} and the example in Section~\ref{sec:state-depend-stoch-general}.

In Section~\ref{sec:examples} we consider a number of examples. For the
process $Y$ we assume the setting proposed in \cite{Fil01, BK14}, which can be
used to model arbitrage-free forward prices at time $t \geq 0$ 
of a contract delivering an asset (commodity) or a stock at time $t + x$. In
this case the operator $\cA$ in \eqref{eq:stochastic-vola-model} is given by
$\cA=\partial/\partial x$ and the space $H$ is given by a Filipovi\'c
space. For the process $(X_t)_{t\geq 0}$, we construct several examples in which we specify the drift and the jump parameters.
We first show that the infinite dimensional lift of the multivariate Barndorff-Nielsen--Shephard model
introduced in \cite{BRS15} is a particular example of our model
class. The stochastic variance process $(X_t)_{t\geq 0}$ in this example is a
stochastic differential equation driven by a \emph{L\'evy subordinator} in the
space of self-adjoint Hilbert-Schmidt operators, as we show in Section~\ref{sec:compare_BRS}. As mentioned above, this example does not involve state-dependent jump intensities. However, 
Sections~\ref{sec:state-depend-stoch-simple},~\ref{sec:state-depend-stoch-fixedONB},
and~\ref{sec:state-depend-stoch-general} provide explicit paramater choices
that \emph{do} involve state-dependent jump intensities. In
Section~\ref{sec:state-depend-stoch-simple} we construct a variance process
which is essentially one-dimensional; evolving along a fixed vector $z \in
\cHplus$. In Section~\ref{sec:state-depend-stoch-fixedONB}, we construct a
truly infinite-dimensional variance process $X$. In this example both $X_t$,
$t\geq 0$, and $Q$ share a fixed orthonormal basis of eigenvectors. This is
imposed to ensure that the commutativity condition given by
Assumption~\ref{def:joint-assumption} is satisfied. In
Section~\ref{sec:state-depend-stoch-general}, we avoid this commutativity
condition by considering an example involving the alternative model discussed
in Remark \ref{rem:joint_model_alt}. In a subsequent
article we plan to compute option prices on forwards in commodity
markets based on the models introduced here. In practice, these computations require the study of
finite dimensional approximations of the variance process and its associated Ricatti equations, which is being tackled in the working paper \cite{Kar21}. 

\subsection{Layout of the article}
In Section~\ref{sec:joint-volat-model} we give an in-depth analysis of
our stochastic volatility model and introduce sufficient parameter assumptions
that ensure the well-posedness of our proposed model. Subsequently, in
Section~\ref{sec:affine-property} we prove the affine-property of our joint
model $(Y_t,X_t)_{t\geq 0}$. 
We split the proof into
two parts, first in Section~\ref{sec:assoc-gener-ricc} we show the existence and
uniqueness of solutions to the associated generalised Riccati
equations under admissible parameter assumptions,
thereafter in Section~\ref{sec:stoch-volat-models} we prove the affine
transform formula. In Section~\ref{sec:examples}, we give several examples of
stochastic volatility models included in our model class by specifying various
variance processes $(X_t)_{t\geq0}$.

\subsection{Notation}
For $(X,\tau)$ a topological space and $S \subset X$ we let $\cB(S)$ denote the Borel-$\sigma$-algebra generated by the relative topology on $S$. We denote by $C^k([0,T];S)$ the space of $S$-valued $k$-times continuously differentiable functions on $[0,T]$. \par 
Throughout this article we fix a separable, infinite-dimensional real Hilbert space $(H,\langle\cdot,\cdot \rangle_H)$.
The space of bounded linear operators from $H$ to $H$ is denoted by $\cL(H)$. The adjoint of an
operator $A \in \cL(H)$ is denoted by $A^*$.
We let $\cL_{1}(H)\subseteq \cL(H)$ and $\cL_{2}(H)\subseteq \cL(H)$ denote respectively the
space of \emph{trace class operators} and the space of \emph{Hilbert-Schmidt operators} on $H$. Recall that $\cL_{1}(H)$ is a Banach space with the norm
\begin{align*}
 \|A\|_{\cL_{1}(H)} = \sum_{n=1}^{\infty} \langle (A^*A)^{1/2} e_n, e_n \rangle_H,
\end{align*}

where $(e_n)_{n\in \MN}$ is an orthonormal basis for $H$. Moreover,
$\cL_{2}(H)$ is a Hilbert space when endowed with the inner product
\begin{align*}
 \langle A, B \rangle_{\cL_{2}(H)} = \sum_{n=1}^{\infty} \langle A e_n, B e_n \rangle_H.
\end{align*}
Recall that for $A \in \cL(H)$ and $B \in \cL_{2}(H)$ we have $AB\in \cL_{2}(H)$ and
\begin{equation}\label{eq:L2L}
\|AB\|_{\cL_{2}(H)} \leq \|A\|_{\cL(H)} \|B\|_{\cL_{2}(H)}\,.
\end{equation}\par  
We define $\cH$ to be the space of all self-adjoint Hilbert-Schmidt operators on $H$ and 
$\cH^+$ to be the cone of all positive operators in $\cH$:
\begin{equation*}
 \cH := \{ A \in \cL_{2}(H) \colon A = A^* \}, \ \text{and}\ 
 \cH^{+} := \{ A \in \cH \colon \langle Ah, h\rangle_H \geq 0 \text{ for all } h\in H \}. 
\end{equation*}
For notational brevity we reserve $\langle \cdot, \cdot \rangle$ to denote the
inner product on $\cL_{2}(H)$, and $\| \cdot \|$ for the norm induced by
$\langle \cdot, \cdot \rangle$.
Note that $\cH$ is a closed subspace of $\cL_{2}(H)$, and that $\cH^{+}$ is a self-dual 
cone in $\cH$. For $x,y\in \cH$ we write $x \leq_{\cH^+} y$ if $y-x\in
\cHplus$ (and $x\geq_{\cH^+} y$ if $x-y\in \cHplus$). For
$a,b\in H$, we let $a\otimes b$ be the linear operator defined by
$a\otimes b (h)=\langle a, h\rangle_{H} b$ for every $h\in H$. Note that $a\otimes a\in\cHplus$
for every $a\in H$. When space is scarce, we shall write $a^{\otimes 2}\df a\otimes a$ .\par
Finally, throughout this article we let $\chi\colon \cH\rightarrow \cH$
denote the truncation function given by $\chi(x) = x1_{\{\| x \| \leq 1\}}$. 

\subsubsection{Hilbert valued semimartingales} 
We let $(\Omega, \cF, (\cF_t)_{t\geq 0},\mathbb{P})$ be a filtered probability
space and let $(\cH,\langle \cdot,\cdot\rangle)$ be a separable Hilbert space. 
Let $M=(M_t)_{t\geq 0}$ be an $\cH$-valued locally square-integrable martingale. Then we know from \cite[Theorem 21.6 and Section 23.3]{Me82} that there exists a unique (up to a $\mathbb{P}$-null set) c\`adl\`ag predictable process $\llangle M\rrangle$ of finite variation taking values in 
the set of positive self-adjoint elements of $\cL_1(\cH)$
such that 
$\llangle M\rrangle_0 =0$ and $M\otimes M- \llangle M\rrangle$ is an $\cL_{1}(\cH)$-valued local martingale.\par 
Following~\cite[Definition 23.7]{Me82}, an $\cH$-valued process 
$X= (X_t)_{t\geq 0}$ is called {\it a semimartingale} if 
\begin{align}\label{eq:semimartingale}
X_t = X_0 + M_t+A_t, \qquad t\geq 0,
\end{align}
 where $X_0$ is $\cH$-valued and $\cF_0$-measurable, $M$ is a $\cH$-valued
 locally square-integrable martingale with c\`adl\`ag paths such that $M_0 =0$
 and $A$ is an adapted $\cH$-valued c\`adl\`ag process of finite variation with $A_0=0$. 
 
 When the process $A$ in \eqref{eq:semimartingale} is predictable, then $X$ is said to be {\it a special semimartingale}. The decomposition 
\eqref{eq:semimartingale} in this case is unique (see \cite[Theorem
23.6]{Me82}) and is called {\it the canonical decomposition} of
$X$. For a semimartingale $X$, we write $\Delta X_{t}=X_{t}-X_{t-}$,
where $X_{t-}=\lim_{s\to t-}X_{s}$. Notice that when $\|\Delta X \|$ is
bounded, then $X$ is a special semimartingale (see \cite[Chapter 4, Exercise 11]{Me82}).

Two $\cH$-valued locally square-integrable martingales $M$ and $N$ are called
{\it orthogonal} if the real-valued process $(\langle
M_{t},N_{t}\rangle)_{t\geq 0}$ is a local martingale.  Further we call $M$
a {\it purely discontinuous} local martingale if it is orthogonal to all continuous local martingales.
An $\cH$-valued semimartingale can be written as (see \cite[Theorem 20.2]{Me82}) 
\begin{align}\label{eq:LM-decomposition}
X_t=X_0 + X_t^c + M_t^d+ A_t, \quad t\geq 0,
\end{align}
where $X_0$ is $\cF_0$-measurable, $X^c$ is a continuous local martingale with $X^c_0 =0$, $M^d$ is a locally square integrable martingale orthogonal to $X^c$ with $M^d_0=0$, and $A$ is a c\`adl\`ag process of finite variation with $A_0=0$. The process $X^c$ in~\eqref{eq:LM-decomposition} is unique (up to a $\mathbb{P}$ null set), see~\cite[Chapter 4, Exercise 13]{Me82}.

We associate with the $\cH$-valued semimartingale $X$, the integer-valued random measure $\mu^X\colon \cB([0,\infty) \times \cH) \rightarrow \MN$ given by 
\begin{equation}\label{eq:def_random_jump_measure}
\mu^X(\D t, \D \xi) = \sum_{s\geq 0}\mathbf{1}_{\{\Delta X_s \neq  0\}} \delta_{(s, \Delta X_s)} (\D t, \D \xi), 
\end{equation}
where $\delta_a$denotes the Dirac measure at point $a$.
Recall from \cite[Theorem II.1.8]{JS03}, the existence and uniqueness (up to a $\mathbb{P}$-null set) of {\it the predictable compensator} $\nu^X$ of $\mu^X$.

Given a semimartingale $X$ we define the `large jumps' process $\check{X}$ by $$\check{X}\df\sum_{s\leq \cdot}\Delta X_{s}\one_{\{\| \Delta X_{s} \| > 1 \}},$$
and we define the `small jumps' process 
\begin{equation}\label{eq:decomposition-chi}
\hat{X} = X - \check{X}.
\end{equation}
Since $\| \Delta \hat{X} \| 
\leq 1$, $\hat{X}$ is a special semimartingale and hence it admits the unique decomposition  
    \begin{equation}\label{eq:canonical-decomposition}
    \hat{X}_{t}=X_0+M_{t}^{\hat{X}}+A_{t}^{\hat{X}},\quad t\geq 0,
    \end{equation}
    where $X_0$ is $\cF_0$-measurable, $M^{\hat{X}}$ is a local martingale with $M^{\hat{X}}_0=0$, and $A^{\hat{X}}$ is a predictable process of finite variation with $A^{\hat{X}}_0=0$.

We are ready to introduce {\it the characteristic triplet} of an $\cH$-valued semimartingale $X$:
\begin{definition}\label{def:semimartingale-characteristics}
  Let $X$ be an $\cH$-valued semimartingale, let $A^{\hat{X}}$ be the predictable process of finite variation from decomposition~\eqref{eq:canonical-decomposition}, let $X^c$ be the continuous martingale part of $X$ as provided by~\eqref{eq:LM-decomposition}, and let $\nu^X$ be the predictable compensator of $\mu^{X}$, where $\mu^X$ is defined by~\eqref{eq:def_random_jump_measure}. Then we call the triplet $({A^{\hat{X}}}, \llangle X^c \rrangle, \nu^X)$ the \emph{characteristic triplet} of $X$. Note that the characteristic triplet consists of a predictable c\`adl\`ag $\cH$-valued process of finite variation, a predictable c\`adl\`ag $\cL_1(\cH)$-valued process of finite variation, and a predictable random measure on $\cB([0,\infty) \times \cH)$.   
\end{definition}

\section{The stochastic volatility model}\label{sec:joint-volat-model}
In this section we specify our stochastic volatility model. First, in
Subsection~\ref{sec:affine-vari-proc}, we introduce the stochastic variance
process $X$, which is an affine Markov
process on the cone of positive self-adjoint Hilbert-Schmidt operators, the existence of which is
established in \cite{CKK20}. We show that whenever the process $X$ admits for a
version with c\`adl\`ag paths, this version is actually a Markov semimartingale with
characteristic triplet of an affine form and the
representation~\eqref{eq:canonical-rep-X-intro} holds true. Subsequently, in
Subsection~\ref{sec:joint_volatility}, we show that given such a stochastic
variance process $X$ there exists a mild solution $Y$ to equation~\eqref{eq:stochastic-vola-model}
with initial value $y\in H$, which enables 
us to introduce our joint stochastic volatility model $Z=(Y,X)$ (see Definition~\ref{def:joint_model} below).

\subsection{The affine variance process}\label{sec:affine-vari-proc}
We model the stochastic variance process $(X_{t})_{t\geq 0}$ as a
time-homogeneous \emph{affine} Markov process on the state space $\cHplus$ in the sense of
\cite{CKK20}. 
Recall that $\chi\colon \cH \rightarrow \cH$, $\chi(x) = x1_{\{ \| x \|
  \leq 1\}}$ is our truncation function. Assume $(b,B,m,\mu)$ to be an admissible parameter set in the following sense 
\begin{assumption}{}{A}\label{def:admissibility}
  An \emph{admissible parameter set} consists of
  \begin{enumerate}
  \item \label{item:drift} $b \in \mathcal{H}^+$,
  \item \label{item:m-2moment} a measure
    $m\colon\cB(\cHpluso)\to [0,\infty]$ such that
    $\int_{\cHpluso} \| \xi \|^2 \,\dm < \infty$ and there exists an element
    $I_{m}\in \cH$ such that
    \begin{align*}
      \int_{\cHpluso }|\langle
     \chi(\xi),h\rangle|\,\dm<\infty,\quad\text{for all }h\in\cH,  
    \end{align*}
    and $\langle I_{m},h\rangle=\int_{\cHpluso }\langle \chi(\xi),h\rangle\, m(\D\xi)$ for every $h\in\cH$. Moreover, it holds that
    \begin{align*}
      \langle b, v\rangle - \int_{\cHpluso} \langle
      \chi(\xi), v\rangle \,m(\D\xi) \geq 0\, \quad\text{for all}\;v\in\cHplus.
    \end{align*}
  \item \label{item:affine-kernel} a $\cH^{+}$-valued measure 
    $\mu \colon \mathcal{B}(\cHpluso) \rightarrow \cH^+$ such that
    \begin{align*}
      \int_{\cHpluso} \langle \chi(\xi), u\rangle \frac{\langle \mu(\D\xi), x \rangle}{\| \xi \|^2 }< \infty,  
    \end{align*}
    for all $u,x\in \cH^{+}$ satisfying $\langle u,x \rangle = 0$\,,
  \item \label{item:linear-operator} an operator $B\in \mathcal{L}(\mathcal{H})$ 
    with adjoint $B^{*}$ satisfying
    \begin{align*}
      \left\langle B^{*}(u) , x \right\rangle 
      - 
      \int_{\cHpluso}
      \langle \chi(\xi),u\rangle 
      \frac{\langle \mu(\D\xi), x \rangle}{\| \xi\|^2 }
    \geq 0,
    \end{align*}
  for all $x,u \in \cHplus$ satisfying $\langle u,x\rangle=0$.
\end{enumerate}
\end{assumption}
Given an admissible parameter set, the main result in \cite[Theorem 2.8]{CKK20}
ensures the existence of a square-integrable time-homogeneous $\cHplus$-valued affine Markov process $X$. More specifically,~\cite[Theorem 2.8 and Proposition 4.17]{CKK20} imply Theorem~\ref{thm:existence-affine-process} below, which we need in our derivations later. In order to state this result we introduce our concept 
of a \emph{weak generator}\footnote{Alternatively, we could work in the framework of generalised Feller semigroups and their generators, as we did in~\cite{CKK20}, but this would require us to introduce more concepts.}, which is a minor modification of~\cite[Definition 9.36]{PZ07}.

\begin{definition}[Weak generator]\label{def:weak-generator}
Let $X$ be a square-integrable time-homogeneous $\cH^+$-valued
Markov process with transition semigroup $(P_t)_{t\geq 0}$  
acting on the space $C_{\textnormal{w}}(\cH^+,\R) := \{ f \in C(\cH^+,\R) \colon \sup_{x\in \cH^+} 
\frac{f(x)}{\|x\|^2+1} <\infty\}$. Then the \emph{weak generator} $\cG \colon \dom(\cG)\subseteq C_{\textnormal{w}}(\cH^+;\R)\rightarrow C_{\textnormal{w}}(\cH^+;\R)$ of $(P_{t})_{t\geq 0}$ is defined as follows: $f \in \dom(\cG)$ if and only if there exists a $g\in C_{\textnormal{w}}(\cH^+,\R)$
such that 
\begin{align*}
g(x) = \lim_{t\downarrow 0}\tfrac{P_{t}f(x)-f(x)}{t}  
\end{align*}
and 
\begin{align*}
  P_{t}f(x)=f(x)+\int_{0}^{t}P_{s}g(x)\D s
\end{align*}
for all $x\in \cH^+$, and in this case we define $\cG f := g$.
\end{definition}

\begin{theorem}\label{thm:existence-affine-process}
Let $(b, B, m, \mu)$ be an admissible parameter set conform
Assumption~\ref{def:admissibility}). Then there exist constants $M,\omega\in [1,\infty)$ and a square-integrable time-homogeneous $\mathcal{H}^+$-valued
Markov process $X$ with transition semigroup
$(P_{t})_{t\geq 0}$, acting on functions $f \in C_{\textnormal{w}}(\cH^+,\R)$, and weak generator $(\cG,\dom(\cG))$ such that the following holds:
\begin{enumerate}
 \item \label{it:exp_bounds} $\mathbb{E}[ \| X_t \|^2 | X_0 = x ] \leq M e^{\omega t} (\| x \|^2 +1)$ for all $t\geq 0$, 
 \item $\lin\set{\E^{-\langle \cdot,
    u\rangle}:\,u\in\cHplus} \cup \{ \langle \cdot , u\rangle \colon u \in \cHplus\}  \subseteq \dom(\cG)$, and
 \item for every $f\in
\lin\set{\E^{-\langle \cdot, u\rangle} \colon u\in\cHplus}\cup \{ \langle \cdot , u\rangle \colon u \in \cHplus\} $ we have:
\begin{align}\label{eq:affine-generator-form}
\mathcal{\cG} f(x) &= \langle b +B(x) , f'(x) \rangle + \int_{\cHpluso}
                   \left(f(x+\xi) -f(x)-\langle \chi(\xi), f'(x)\rangle\right)\,M(x,\D\xi),
\end{align}
where $M(x,\D\xi)\df\dm +\frac{\langle\dmu,x\rangle}{\norm{\xi}^2}$.
\end{enumerate}
\end{theorem}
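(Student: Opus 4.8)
The plan is to deduce all three assertions from the existence and regularity results of~\cite{CKK20}, translating statements about the \emph{generalised Feller generator} of $X$ (in the sense used there) into the language of the weak generator of Definition~\ref{def:weak-generator}.

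First I would invoke~\cite[Theorem 2.8]{CKK20}: under Assumption~\ref{def:admissibility} it produces a square-integrable time-homogeneous $\cHplus$-valued affine Markov process $X$ whose transition semigroup $(P_{t})_{t\geq 0}$ is a strongly continuous generalised Feller semigroup on the weighted space attached to the weight $\varrho(x)=1+\norm{x}^{2}$; in particular there are constants $M,\omega\in[1,\infty)$ with $\norm{P_{t}}\leq M\E^{\omega t}$ in the corresponding operator norm. Assertion~\eqref{it:exp_bounds} is then the second-moment estimate of~\cite[Proposition 4.17]{CKK20}: since $\norm{\cdot}^{2}\leq\varrho$ and $P_{t}\varrho\leq M\E^{\omega t}\varrho$ pointwise, one gets $\EX{\norm{X_{t}}^{2}\mid X_{0}=x}=P_{t}(\norm{\cdot}^{2})(x)\leq M\E^{\omega t}(\norm{x}^{2}+1)$.

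For the exponential functions $f_{u}\df\E^{-\langle\cdot,u\rangle}$, $u\in\cHplus$, I would recall from~\cite{CKK20} that they lie in the domain of the generalised Feller generator and that this generator acts on them by the affine expression~\eqref{eq:affine-generator-form}. Here $f_{u}\in C_{\textnormal{w}}(\cH^{+},\R)$ because $0<f_{u}\leq 1$ on the self-dual cone, and a short computation shows $\cG f_{u}(x)=f_{u}(x)\bigl[-\langle b+B(x),u\rangle+\int_{\cHpluso}(\E^{-\langle\xi,u\rangle}-1+\langle\chi(\xi),u\rangle)\,M(x,\D\xi)\bigr]$, whose bracket grows at most linearly in $x$ and whose integral is finite by the admissibility conditions; hence $\cG f_{u}\in C_{\textnormal{w}}$ as well. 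To pass from the generalised Feller generator to the weak generator I would use that each point evaluation $f\mapsto f(x)$ is a bounded linear functional on the weighted space. Thus weighted-norm convergence of $t^{-1}(P_{t}f_{u}-f_{u})$ to $\cG f_{u}$ gives the pointwise limit in Definition~\ref{def:weak-generator}, and evaluating the identity $P_{t}f_{u}-f_{u}=\int_{0}^{t}P_{s}(\cG f_{u})\,\D s$, which holds as a Bochner integral in the weighted space, at the point $x$ yields the required scalar integral equation. Taking linear combinations covers $\lin\set{f_{u}:u\in\cHplus}$.

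The main obstacle is the linear functionals $\ell_{u}\df\langle\cdot,u\rangle$, $u\in\cHplus$, which are \emph{unbounded} and so are not part of the exponential core treated directly in~\cite{CKK20}. That $\ell_{u}\in C_{\textnormal{w}}$ is immediate from Cauchy--Schwarz, and the candidate image is $g_{u}(x)\df\langle b+B(x),u\rangle+\int_{\cHpluso}\langle\xi-\chi(\xi),u\rangle\,M(x,\D\xi)$, which is affine in $x$, hence in $C_{\textnormal{w}}$, and finite since $\norm{\xi-\chi(\xi)}=\norm{\xi}\one_{\{\norm{\xi}>1\}}\leq\norm{\xi}^{2}$ is $M(x,\cdot)$-integrable by the second-moment properties of $X$. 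To verify the two conditions of Definition~\ref{def:weak-generator} I would establish Dynkin's formula $P_{t}\ell_{u}(x)=\ell_{u}(x)+\int_{0}^{t}P_{s}g_{u}(x)\,\D s$ for the first moment $P_{t}\ell_{u}(x)=\EX{\langle X_{t},u\rangle\mid X_{0}=x}$; this is exactly the integral equation required, and differentiating it at $t=0$ (using continuity of $s\mapsto P_{s}g_{u}(x)$) yields $g_{u}=\cG\ell_{u}$ as the pointwise limit. The delicate point is justifying Dynkin's formula for the unbounded $\ell_{u}$: I would obtain it by approximating $\ell_{u}$ with the bounded functions $\lambda^{-1}(1-f_{\lambda u})\to\ell_{u}$ as $\lambda\downarrow 0$, noting that $-\lambda^{-1}\cG f_{\lambda u}\to g_{u}$ by the previous paragraph, and passing to the limit in the already-established identity for $f_{\lambda u}$ with the help of the moment bound~\eqref{it:exp_bounds} and dominated convergence (the difference quotients being controlled by $\norm{\xi}^{2}$). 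Taking linear spans of the two families then establishes~(ii) and~(iii).
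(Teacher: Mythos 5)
The paper gives no proof of this theorem at all: it simply asserts that the statement follows from \cite[Theorem 2.8 and Proposition 4.17]{CKK20}. Your argument is a correct and considerably more detailed reconstruction of that deduction — the moment bound from the generalised Feller weight estimate, the passage from the generalised Feller generator to the weak generator via boundedness of point evaluations, and in particular the treatment of the unbounded linear functionals $\langle\cdot,u\rangle$ via the approximation $\lambda^{-1}(1-\E^{-\lambda\langle\cdot,u\rangle})$ together with the dominated-convergence bounds, all of which the paper leaves entirely to the citation.
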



An additional assumption we want to impose on the affine variance processes
under consideration is the requirement, that $X$ must admit for a version with
c\`adl\`ag paths.
\begin{assumption}{}{B}\label{assumption:cadlag-paths}
The time-homogeneous Markov process $X$ associated with the parameters $(b,B,m,\mu)$ of Assumption \ref{def:admissibility}
has c\`adl\`ag paths.
\end{assumption}
Unfortunately, in the setting of generalized Feller semigroups (which we used to establish Theorem~\ref{thm:existence-affine-process}), it is not immediate that the Markov process that is constructed has c\`adl\`ag paths (but see \cite[Theorem 2.13]{CT20} for a positive result). Some (rather limiting) conditions that ensure that Assumption~\ref{assumption:cadlag-paths} is satisfied are provided in the lemma below. In ongoing work~\cite{Kar21}, we hope to establish that in fact, Assumption~\ref{assumption:cadlag-paths} is always satisfied.
\begin{lemma}\label{prop:cadlag-version}
  Assume that $(b,B,m,\mu)$ is an admissible parameter set that fulfill either one of the following two cases:
  \begin{enumerate}
  \item \label{item:cadlag-1} (the L\'evy-driven case) $\mu(\D\xi)=0$, 
  \item \label{item:cadlag-3} (finite activity jumps) $m(\cHpluso)<\infty$ and
    $\int_{\cHpluso}\langle x,\frac{\mu(\D\xi)}{\norm{\xi}^{2}}\rangle<\infty$
    for all $x\in\cHplus$.
  \end{enumerate}
  Then the affine Markov process $(X_{t})_{t\geq 0}$ associated to
  $(b,B,m,\mu)$ admits for a version with c\`adl\`ag paths. 
\end{lemma}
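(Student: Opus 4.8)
The plan is to handle both cases by constructing, for the given admissible parameters, an \emph{explicit} $\cHplus$-valued Markov process with c\`adl\`ag paths whose transition semigroup has weak generator equal to \eqref{eq:affine-generator-form}; since the generalised Feller semigroup attached to an admissible parameter set is uniquely determined by its generator on the generating class $\lin\{e^{-\langle\cdot,u\rangle}:u\in\cHplus\}\cup\{\langle\cdot,u\rangle:u\in\cHplus\}$ (see~\cite{CKK20}), this explicit process shares its finite-dimensional distributions with $X$ and is therefore the desired c\`adl\`ag version.

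In case~\eqref{item:cadlag-1} we have $\mu=0$, so $M(x,\D\xi)=m(\D\xi)$ is state-independent, and Assumption~\ref{def:admissibility}\eqref{item:m-2moment} (in particular $\int_{\cHpluso}\|\xi\|^2\,m(\D\xi)<\infty$ together with the integrability of $\langle\chi(\xi),\cdot\rangle$) shows that $m$ is a square-integrable L\'evy measure supported on the cone. I would let $L$ be the $\cH$-valued pure-jump L\'evy process with characteristics $(b,0,m)$ relative to $\chi$, built from a Poisson random measure on $\cHpluso$ with intensity $\D t\,m(\D\xi)$; its paths are c\`adl\`ag. Then I would realise $X$ as the solution of the linear equation $\D X_t = B X_t\,\D t + \D L_t$, $X_0=x$. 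Because $B\in\cL(\cH)$ is bounded, $(e^{tB})_{t\geq 0}$ is a uniformly continuous group and the substitution $Z_t\df e^{-tB}X_t$ turns this into $Z_t = x + \int_0^t e^{-sB}\,\D L_s$, an ordinary $\cH$-valued stochastic integral of a bounded deterministic integrand against a square-integrable c\`adl\`ag semimartingale; hence $Z$, and therefore $X_t = e^{tB}Z_t$, is c\`adl\`ag. An It\^o computation on the test functions then identifies the weak generator of $X$ with \eqref{eq:affine-generator-form} for $M(x,\D\xi)=m(\D\xi)$.

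In case~\eqref{item:cadlag-3} the total jump rate $\lambda(x)\df M(x,\cHpluso)=m(\cHpluso)+\int_{\cHpluso}\frac{\langle\mu(\D\xi),x\rangle}{\|\xi\|^2}$ is finite for every $x\in\cHplus$ and affine in $x$, and $\int_{\cHpluso}\chi(\xi)\,M(x,\D\xi)$ defines an affine $\cH$-valued map. Rewriting \eqref{eq:affine-generator-form} in the finite-activity form $\cG f(x)=\langle\tilde b(x),f'(x)\rangle+\int_{\cHpluso}(f(x+\xi)-f(x))\,M(x,\D\xi)$ with $\tilde b(x)\df b+B(x)-\int_{\cHpluso}\chi(\xi)\,M(x,\D\xi)$ exhibits $\cG$ as the generator of a piecewise-deterministic Markov process. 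I would construct this process by interlacing: between jumps follow the flow of the affine (hence globally Lipschitz) field $\tilde b$, draw inter-jump waiting times from the hazard rate $\lambda(\cdot)$ evaluated along the flow, and at each jump add an increment sampled from the normalised kernel $M(\,\cdot\,,\D\xi)/\lambda(\cdot)$. By construction the resulting process has c\`adl\`ag paths, and a direct computation returns \eqref{eq:affine-generator-form} as its generator. Non-explosion of the jump times follows from the at-most-linear growth of $\lambda$ and $\tilde b$ via a Lyapunov/Gronwall estimate (the same a priori bound underlying Theorem~\ref{thm:existence-affine-process}\eqref{it:exp_bounds}).

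The step I expect to be the main obstacle is the \emph{identification} common to both cases, namely verifying that the explicitly constructed c\`adl\`ag process has the same law as the abstractly constructed $X$. This hinges on the well-posedness (uniqueness) of the generalised Feller semigroup, equivalently the martingale problem, for $\cG$ from~\cite{CKK20}: once uniqueness is granted, matching the generators through the It\^o and interlacing computations above pins down the finite-dimensional distributions. Subsidiary technical points are the well-definedness and c\`adl\`ag regularity of the Hilbert-space stochastic integral in case~\eqref{item:cadlag-1} (handled by the boundedness of $B$ and the finite second moment of $L$) and the non-explosion argument in case~\eqref{item:cadlag-3}.
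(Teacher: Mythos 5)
Your proposal is correct and follows essentially the same route as the paper, which simply compresses both cases into citations: case~(i) is handled by observing that the generator is that of the L\'evy-driven SDE $\D X_t = B X_t\,\D t + \D L_t$ and invoking \cite[Theorem 4.3]{PZ07}, and case~(ii) by invoking \cite[Proposition 4.19]{CKK20}, which carries out the finite-activity (interlacing-type) construction you describe. Your explicit unpacking of these constructions, together with the identification via uniqueness of the generalised Feller semigroup, is exactly the content hidden behind those references.
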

\begin{proof}
To prove \ref{item:cadlag-1}, observe that the weak generator
\eqref{eq:affine-generator-form} associated to the admissible parameters
$(b,B,m,0)$ is a weak generator of a L\'evy driven SDE as described for
example in \cite[equation 9.37]{PZ07}) and hence the assertion follows from
\cite[Theorem 4.3]{PZ07}. In case of~\ref{item:cadlag-3}, the assertion follows from~\cite[Proposition 4.19]{CKK20}.
\end{proof}
We show in the next proposition that the version of 
$X$ with c\`adl\`ag paths is in fact a Markovian semimartingale: 
\begin{proposition}\label{prop:affine-semimartingale}
  Suppose that $(b,B,m,\mu)$ is an admissible parameter set conform Assumption~\ref{def:admissibility} and such that the associated affine Markov
  process $X$ satisfies Assumption \ref{assumption:cadlag-paths}. Then there
  exists a version of $(X_{t})_{t\geq 0}$ which is a
  $\cHplus$-valued semimartingale with semimartingale characteristics
  $(A,C,\nu^X)$ 
  of the form:
  \begin{align}
    A_{t}&=\int_{0}^{t}b+B(X_{s})\D s\\
    C_{t}&=0, \label{eq:continuous-char-X}\\
\nu^X(\D t,\D\xi)&=M(X_t,\D \xi) \D t = \Big(m(\D\xi)+\langle X_{t},\frac{\mu(\D\xi)}{\norm{\xi}^{2}}\rangle\Big)\D t.\label{eq:jump-char-X}
  \end{align}
  Moreover, the following representation holds
  \begin{align}\label{eq:canonical-rep-X}
    X_{t}=X_0+\int_{0}^{t}\Big(b+B(X_{s})+\int_{{\cHplus\cap \set{
    \norm{\xi}> 1}}}\xi \,M(X_s,\D\xi)\Big)\D s+ J_{t},\quad t\geq 0,  
  \end{align}
  where $J$ is a purely discontinuous square integrable martingale.
\end{proposition}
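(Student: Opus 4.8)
The plan is to read off the whole characteristic triplet, and the representation~\eqref{eq:canonical-rep-X}, from the martingale property supplied by the weak generator of Theorem~\ref{thm:existence-affine-process}, applied first to linear and then to exponential test functions, matched against the semimartingale characterisation of \cite[Theorem II.2.42]{JS03}. The first step is a Dynkin-type lemma: for every $f\in\dom(\cG)$ the process $M^{f}_{t}\df f(X_{t})-f(X_{0})-\int_{0}^{t}\cG f(X_{s})\,\D s$ is a martingale. This follows from the integrated identity $P_{t}f=f+\int_{0}^{t}P_{s}\cG f\,\D s$ in Definition~\ref{def:weak-generator} together with the Markov property and time-homogeneity: conditioning on $\cF_{s}$ and using $\EX{f(X_{t})\mid\cF_{s}}=P_{t-s}f(X_{s})$ and $\EX{\int_{s}^{t}\cG f(X_{r})\,\D r\mid\cF_{s}}=\int_{0}^{t-s}P_{r}\cG f(X_{s})\,\D r=P_{t-s}f(X_{s})-f(X_{s})$ gives $\EX{M^{f}_{t}\mid\cF_{s}}=M^{f}_{s}$, the integrability being furnished by the second-moment bound of Theorem~\ref{thm:existence-affine-process}.

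Next I would feed the linear functionals $f(x)=\langle x,u\rangle$, $u\in\cHplus$, which lie in $\dom(\cG)$. Since $f'(x)=u$ and $f(x+\xi)-f(x)-\langle\chi(\xi),u\rangle=\langle\xi\one_{\{\norm{\xi}>1\}},u\rangle$, the affine form~\eqref{eq:affine-generator-form} yields $\cG f(x)=\langle b+B(x)+\int_{\cHplus\cap\{\norm{\xi}>1\}}\xi\,M(x,\D\xi),u\rangle$. Hence, with $J_{t}\df X_{t}-X_{0}-\int_{0}^{t}\big(b+B(X_{s})+\int_{\{\norm{\xi}>1\}}\xi\,M(X_{s},\D\xi)\big)\,\D s$, the real process $\langle J_{t},u\rangle$ is a martingale for every $u\in\cHplus$, hence for every $u\in\cH$ because $\cHplus-\cHplus=\cH$. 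Using the moment bound to secure Bochner integrability and $\EX{\norm{J_{t}}^{2}}<\infty$, I would upgrade this to the assertion that $J$ is a square-integrable $\cH$-valued martingale; this already exhibits $X$ as a semimartingale with the claimed drift.

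It remains to pin down the continuous characteristic and the compensator. For this I would insert the exponentials $f(x)=\E^{-\langle x,u\rangle}$, $u\in\cHplus$, into both the weak-generator martingale and the infinite-dimensional It\^o formula for the semimartingale $X$ just built. The generator side equals $\E^{-\langle x,u\rangle}\big(-\langle b+B(x),u\rangle+\int_{\cHpluso}(\E^{-\langle\xi,u\rangle}-1+\langle\chi(\xi),u\rangle)M(x,\D\xi)\big)$ and carries no second-order term; the It\^o side produces the same shape, but with $b+B(x)$ replaced by the density of the first characteristic, $M(x,\cdot)$ by the compensator density, and an extra Gaussian contribution $\tfrac12\langle c_{x}u,u\rangle$ stemming from $\llangle X^{c}\rrangle$. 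Equating the two predictable finite-variation parts (unique since $f(X)$ is bounded, hence special) and letting $t\downarrow0$ from an arbitrary initial state $x\in\cHplus$ gives a pointwise identity of the two integrands for all $u\in\cHplus$. By uniqueness of the L\'evy--Khintchine exponent, the compensator density is $M(x,\cdot)$, the drift density is $b+B(x)$, and $\langle c_{x}u,u\rangle=0$ for all $u\in\cHplus$; since $c_{x}\geq0$ and $\cHplus$ spans $\cH$, this forces $c_{x}=0$, i.e.\ $C_{t}=0$ and $X^{c}\equiv0$.

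With $X^{c}=0$ the martingale $J$ has no continuous part, so it is purely discontinuous; writing $J_{t}=\int_{0}^{t}\int_{\cHpluso}\xi\,(\mu^{X}-\nu^{X})(\D s,\D\xi)$ and invoking $\int_{\cHpluso}\norm{\xi}^{2}M(x,\D\xi)<\infty$ (from Assumption~\ref{def:admissibility} and the moment bound) shows $J$ is square-integrable, while the algebraic reassembly of compensated small jumps, compensated large jumps and truncated drift reproduces exactly~\eqref{eq:canonical-rep-X}. The main obstacle is the third step: rigorously justifying the Hilbert-space It\^o formula for the exponentials and the uniqueness of the characteristic triplet in infinite dimensions, i.e.\ the genuine adaptation of \cite[Theorem II.2.42]{JS03}. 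Here the square-integrability from \cite{CKK20} and the trace-class structure of $\llangle X^{c}\rrangle$ from \cite{Me82} are what make the comparison and the separation of the Gaussian, drift, and jump contributions legitimate.
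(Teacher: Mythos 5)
Your proposal follows essentially the same route as the paper: the Dynkin-type martingale from the weak generator (the paper's Proposition~\ref{prop:in_weak_gen_gives_martingale}), linear test functions $\langle\cdot,u\rangle$ to obtain the decomposition~\eqref{eq:canonical-rep-X} (the paper uses an ONB with $e_n=e_n^+-e_n^-$ where you use $\cHplus-\cHplus=\cH$), and then exponentials $\E^{-\langle\cdot,u\rangle}$ compared via the It\^o formula and the uniqueness of the canonical decomposition of the bounded special semimartingale $g_u(X)$, adapting \cite[Theorem II.2.42]{JS03}. The only cosmetic difference is how the integrand identity is extracted: the paper integrates $\E^{\langle X_{s-},u\rangle}$ against both finite-variation parts to get an a.e.-in-$s$ identity of the characteristics directly, which is slightly cleaner than your ``let $t\downarrow 0$ from an arbitrary initial state'' since the characteristics are random processes that must be identified at all times, not just at $t=0$.
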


In order to prove Proposition~\ref{prop:affine-semimartingale}, we need the following result,
which can be obtained by mimicking the proof of~\cite[Proposition 9.38]{PZ07}:

\begin{proposition}\label{prop:in_weak_gen_gives_martingale}
Let $X$ be a square-integrable time-homogeneous c\`adl\`ag
Markov process on $\cHplus$ with transition semigroup $(P_t)_{t\geq 0}$  
acting on $C_{\textnormal{w}}(\cH^+,\R)$,
let $\cG$ be its weak generator and let $f\in \dom(\cG)$.
Define $M_t = f(X_t) - f(X_0)  - \int_0^{t} (\cG f) (X_s) \D s$.
Then $(M_t)_{t\geq 0}$ is a real-valued martingale. 
\end{proposition}

\begin{proof}[Proof of Proposition~\ref{prop:affine-semimartingale}]
    Let $(e_{n})_{n\in\MN}$
   be an orthonormal basis of $\cH$, then for every $n\in\MN$, we have $e_{n}=e_{n}^{+}-e_{n}^{-}$, for $e_{n}^{+},e_{n}^{-}\in \cHplus$. By Theorem~\ref{thm:existence-affine-process} and Proposition~\ref{prop:in_weak_gen_gives_martingale}
    applied to $f = \langle \cdot, e_n \rangle$ there exists a square-integrable martingale $J^{(n)}$ such that
   \begin{align*}
   \langle X_t , e_n\rangle  &=  \langle X_0 , e_n\rangle  + \int_0^t\Big(\langle b+B(X_s) , e_n\rangle + \int_{\cHplus \cap \{\|\xi\|>1\}}\langle \xi, e_n\rangle M(X_s, \D \xi) \Big)\D s\\
  & \qquad + J_t^{(n)}, \qquad t\geq 0\,.
   \end{align*}
Noting that $X=\sum_{n=1}^{\infty}\langle X,e_{n} \rangle e_{n}$, we infer
that $X$ is an $\cHplus$-valued semimartinagle with the decomposition in
\eqref{eq:canonical-rep-X}, where $J = \sum_{n=1}^{\infty} J^{(n)} e_n$ is a square integrable $\cH$-valued martingale. 

We are left to show that $J$ is purely discontinuous and to make the characteristic triplet of $X$ explicit. These are known results in the finite-dimensional setting (see for instance \cite[Theorem II.2.42]{JS03}). 
Below, we adapt the proof of~\cite[Theorem II.2.42]{JS03} to our setting.
    For that we decompose $X =A^{\hat{X}} + N^{\hat{X}} + \check{X}$ as in
    \eqref{eq:decomposition-chi} and \eqref{eq:canonical-decomposition}.
     Denote by $(A^{\hat{X}},C,\nu^X)$ the characteristic triplet of the
     semimartingale $X$. Let $u\in\cHplus$ be arbitrary and
     consider the function $g_{u} = \E^{-\langle \cdot, u\rangle}$, $u\in \cHplus$. 
 On the one hand, applying the It\^o formula to $g_{u}(X)$ (see for instance,
 \cite[Theorem 27.2]{Me82}), yields that $g_{u}(X)$ is a real-valued semimartingale and 
     \begin{align}\label{eq:Ito-semimartingale-char}
       &\E^{-\langle X_{t},u \rangle}\nonumber\\
       &\quad = \E^{-\langle  X_0 ,u \rangle}
       -\int_{0}^{t}\E^{-\langle X_{s-},u \rangle}
       \langle u, \D A_{s}^{ \hat{X} }\rangle
       -\int_{0}^{t}\E^{-\langle X_{s-},u \rangle}
       \langle u, \D N_{s}^{\hat{X}}\rangle 
       \nonumber\\
       &\qquad 
       +\tfrac{1}{2}\int_{0}^{t}
       \E^{-\langle X_{s-},u \rangle}
       \langle u\otimes u,\D C_{s}\rangle_{\cL_{2}(\cH)}
       + \int_0^t\int_{\cHpluso} 
       \E^{-\langle X_{s-},u \rangle} K(\xi,u) \nu^X(\D s,\D\xi)
       \nonumber\\
       &\qquad
       +\int_0^t\int_{\cHpluso} 
       \E^{-\langle X_{s-},u \rangle}K(\xi, u) 
       (\mu^{X}(\D s, \D\xi) - \nu^X(\D s,\D\xi))\,,
     \end{align}
     where $K(\xi, u) = \E^{-\langle
                                           \xi,u \rangle}-1+\langle 
                                            \chi(\xi),u\rangle$. 
   On the other hand, by Proposition~\ref{prop:in_weak_gen_gives_martingale} there exists a real-valued martingale $I^u$ such that
     \begin{align}\label{eq:ito-generator}
      \E^{-\langle X_{t},u \rangle}&=\E^{-\langle X_0,u
                                           \rangle}+I^u_{t}-\int_{0}^{t}\E^{-\langle
                                           X_{s},u \rangle}\big(\langle
                                           b+B(X_{s}),u\rangle\big)\D s \nonumber\\
                                         &\quad+\int_{0}^{t}\int_{\cHpluso} \E^{-\langle
                                           X_{s},u \rangle}K(\xi,u) M(X_{s},\D\xi)\D s\,, \quad t\geq 0\,.
     \end{align}
 Note that for every $t\geq 0$, the integrals with respect to $\D s$
 on the right-hand side of \eqref{eq:ito-generator} remain unchanged if we
 take the left-limits $X_{s-}$ instead of $X_{s}$, as the number of jumps on
 $[0,t]$ is at most countable. Moreover, as $X$ takes values in $\cHplus$, we have that $g_{u}(X)$ is bounded and hence it is a special semimartingale and its canonical decomposition is unique. Therefore the finite
    variation part in formulas~\eqref{eq:Ito-semimartingale-char}
    and~\eqref{eq:ito-generator} must coincide, i.e., 
    \begin{align}\label{eq:semimartingale-compare}
   & -\int_{0}^{t}\E^{-\langle X_{s-},u \rangle}\big(\langle u, \D A_{s}^{\hat{X}}\rangle
   +\tfrac{1}{2}\langle u\otimes u, \D
      C_{s}\rangle_{\cL^2(\cH)}
      +\int_{\cHpluso}K(\xi,u)\nu^X(\D s,\D\xi)\big)\nonumber\\
    &\quad   =
      -\int_{0}^{t}\E^{-\langle X_{s},u \rangle}\big(\langle
      b+B(X_{s}),u\rangle+\int_{\cHpluso}K(\xi,u)M(X_{s},\D\xi)\big)\D s,
    \end{align}
    must hold for all $ t\geq 0$ almost surely. Now, by integrating $\E^{\langle X_{s-},u\rangle}$ with
    respect to both sides of~\eqref{eq:semimartingale-compare} over
    $[0,t]$, we obtain
    \begin{align*}
    &  -\langle u, A_{t}^{\hat{X}}\rangle+\tfrac{1}{2}\langle u\otimes u,
      C_{t}\rangle_{\cL^2(\cH)}+\int_{\cHpluso}K(\xi,u)\nu^X(
      [0,t],\D\xi)\\
     &\qquad  =-\langle
       u,\int_{0}^{t}b+B(X_{s})\D
       s\rangle+\int_{0}^{t}\int_{\cHpluso}K(\xi,u)M(X_{s},\D\xi)\D s, \quad
       \forall t \geq 0 \text{ a.s.}
    \end{align*}
 Now, following similar steps as in the proof of \cite[Theorem II.2.42]{JS03}
 we conclude that
    $C_{t}=0$, $\nu^X([0,t],\D\xi)=\int_{0}^{t}M(X_{s},\D\xi)\D s$ and
    $A^{\hat{X}}_{t}=\int_{0}^{t}b+B(X_{s})\, \D s$, $t\geq 0$, and the
    statements of the proposition follow. 
\end{proof}

\subsection{The joint stochastic volatility model}\label{sec:joint_volatility}
In this section we present our joint model, see
Definition~\ref{def:joint_model} below, which involves taking the square root
$X^{1/2}$ of the process $X$ from Theorem~\ref{thm:existence-affine-process}
as volatility for the $H$-valued process $Y$ given by equation~\eqref{eq:Y} below.\par 

Throughout this section we consider the following setting: let $(b,B,m,\mu)$ be a parameter set satisfying Assumption~\ref{def:admissibility}, let $x\in \cH^+$ and $y\in H$, and let $Q \in \cL_1(H)$ be self-adjoint and positive. Next, let $X$ be the square-integrable time-homogeneous Markov process associated with the parameter set $(b,B,m,\mu)$ the existence of which is guaranteed by Theorem~\ref{thm:existence-affine-process}; we denote the filtered probability space on which $X$ is defined by $(\Omega^1,\cF^1,(\mathcal{F}_t^1)_{t\ge 0}, \mathbb{P}^1)$ and assume $\mathbb{P}^1(X_0=x)=1$. In addition, we let $(\Omega^2,\cF^2,  (\mathcal{F}_t^2)_{t\ge 0}, \mathbb{P}^2)$ be another filtered probability space, which satisfies the usual conditions and allows for a $Q$-Wiener process $W^{Q}\colon [0,\infty)\times \Omega \rightarrow H$.
Now set
\begin{align*}
(\Omega, \cF, \MF, \MP)\df(\Omega^{1}\times\Omega^{2}, (\cF^{1}\otimes \cF^{2}),
(\cF^{1}_{t}\otimes \cF_{t}^{2})_{t\geq 0}, \MP^{1}\otimes \MP^{2})\,,
\end{align*}
and denote the expectation with respect to $\MP$ by $\mathbb{E}$. With slight abuse of notation we consider $X$ and $W^{Q}$ to be processes on $(\Omega,\cF,\MF)$ (note that they are independent).\par 
In addition, we assume $(\cA, \dom(\cA))$ to be the generator of a strongly continuous semigroup $(S(t))_{t\geq 0}$ on $H$.

Now consider the following SDE, for which Lemma~\ref{lem:integrand} below establishes the existence of a mild solution:
\begin{align}\label{eq:Y}
  \begin{cases}
    \D Y_{t}=\cA Y_{t}\,\D t+X_{t}^{1/2}\,\D W^{Q}_{t}\,, \quad t\geq 0,\\
  Y_0 =y.  
  \end{cases}
\end{align}

\begin{lemma}\label{lem:integrand}
Assume the setting described above, in particular, let $(b,B,m,\mu)$ satisfy
Assumption~\ref{def:admissibility} and let $X$ be the associated affine process. Moreover, let Assumption~\ref{assumption:cadlag-paths} hold. Then $X$ is progressive,
  \begin{align}\label{eq:int-condition-X}
 \EX{\int_0^t\norm{X^{1/2}_{s}Q^{1/2}}^2 \D s}<\infty\,,
\end{align}
and moreover
\begin{align}\label{eq:solution-Y}
 Y_{t}=S(t)y+\int_{0}^{t}S(t-s)X_{s}^{1/2}\D W^{Q}_{s}\,, \quad  t\geq 0\,,
\end{align}
is the unique mild solution to \eqref{eq:Y}.
\end{lemma}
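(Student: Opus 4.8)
The plan is to establish the three claims in order: progressive measurability of $X$, the integrability bound \eqref{eq:int-condition-X}, and finally the existence-and-uniqueness of the mild solution \eqref{eq:solution-Y}. For the first claim, since $X$ has c\`adl\`ag paths by Assumption~\ref{assumption:cadlag-paths} and is adapted (being Markov with respect to $(\cF_t^1)_{t\geq 0}$), progressive measurability follows from the standard fact that every adapted process with right-continuous paths is progressively measurable. I would simply cite this classical result.

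For the integrability bound \eqref{eq:int-condition-X}, the key observation is the submultiplicativity inequality \eqref{eq:L2L}: since $Q^{1/2}\in\cL(H)$ is bounded, $\norm{X_s^{1/2}Q^{1/2}}_{\cL_2(H)}\leq \norm{X_s^{1/2}}_{\cL(H)}\norm{Q^{1/2}}_{\cL(H)}$. The operator norm $\norm{X_s^{1/2}}_{\cL(H)}$ is the square root of the largest eigenvalue of $X_s$, which is dominated by the Hilbert--Schmidt norm, so $\norm{X_s^{1/2}}_{\cL(H)}^2=\norm{X_s}_{\cL(H)}\leq \norm{X_s}$. Combining these gives $\norm{X_s^{1/2}Q^{1/2}}^2\leq \norm{Q^{1/2}}_{\cL(H)}^2\norm{X_s}$, and since $Q$ is trace-class, $\norm{Q^{1/2}}_{\cL(H)}^2\leq \Tr(Q)<\infty$. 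Taking expectations and applying Tonelli's theorem together with the moment bound from Theorem~\ref{thm:existence-affine-process}\ref{it:exp_bounds}, namely $\EX{\norm{X_s}}\leq \big(\EX{\norm{X_s}^2}\big)^{1/2}\leq \big(Me^{\omega s}(\norm{x}^2+1)\big)^{1/2}$, yields a finite integral over $[0,t]$ for every $t\geq 0$.

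For the final claim, the bound \eqref{eq:int-condition-X} shows precisely that the integrand $s\mapsto S(t-s)X_s^{1/2}Q^{1/2}$ is stochastically integrable with respect to the cylindrical Wiener process underlying $W^Q$ (using that $\norm{S(t-s)}_{\cL(H)}$ is uniformly bounded on the compact interval $[0,t]$ by the semigroup bound $\norm{S(r)}_{\cL(H)}\leq \tilde Me^{\tilde\omega r}$), so the stochastic convolution \eqref{eq:solution-Y} is well-defined. I would then invoke the standard existence-and-uniqueness theory for mild solutions of semilinear stochastic evolution equations in the sense of Da Prato--Zabczyk: the stochastic convolution with a progressively measurable, square-integrable integrand is the unique (up to modification) mild solution of \eqref{eq:Y}.

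\textbf{The main obstacle} I anticipate is a careful bookkeeping issue rather than a deep one: verifying that the integrand satisfies exactly the hypotheses required by the cited stochastic-integration and mild-solution theorems. Specifically, one must confirm that $X^{1/2}$ inherits progressive measurability from $X$ (the map $A\mapsto A^{1/2}$ on $\cHplus$ is continuous in the Hilbert--Schmidt topology, so this composes correctly), and that the integrability is formulated in terms of the correct Hilbert--Schmidt norm of $X_s^{1/2}Q^{1/2}$ matching the conventions for $Q$-Wiener integration. Once the norm estimate $\norm{X_s^{1/2}Q^{1/2}}^2\leq \Tr(Q)\,\norm{X_s}$ and the moment bound are in hand, the remaining steps are essentially a direct appeal to established theory.
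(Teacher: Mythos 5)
Your overall strategy is the same as the paper's: progressivity from adaptedness plus c\`adl\`ag paths, the norm estimate via \eqref{eq:L2L} combined with the second-moment bound of Theorem~\ref{thm:existence-affine-process}~\ref{it:exp_bounds} and H\"older, and then the standard Da Prato--Zabczyk existence and uniqueness theory for the mild solution. There is, however, one concrete error in your second step: the inequality
\[
\norm{X_s^{1/2}Q^{1/2}}_{\cL_2(H)}\leq \norm{X_s^{1/2}}_{\cL(H)}\,\norm{Q^{1/2}}_{\cL(H)}
\]
is false --- an $\cL_2$-norm cannot be controlled by a product of operator norms (take both factors equal to a rank-$n$ orthogonal projection: the left side is $\sqrt{n}$, the right side is $1$) --- and consequently the intermediate bound $\norm{X_s^{1/2}Q^{1/2}}^2\leq \norm{Q^{1/2}}_{\cL(H)}^2\norm{X_s}$ is also false in general (test it on $Q=\tfrac1n\sum_{i\le n}e_i\otimes e_i$ and $X_s=\sum_{i\le n}e_i\otimes e_i$). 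The correct application of \eqref{eq:L2L} keeps the Hilbert--Schmidt norm on the second factor,
\[
\norm{X_s^{1/2}Q^{1/2}}\leq \norm{X_s^{1/2}}_{\cL(H)}\,\norm{Q^{1/2}}_{\cL_2(H)},
\qquad \norm{Q^{1/2}}_{\cL_2(H)}^2=\Tr(Q)=\norm{Q}_{\cL_1(H)},
\]
which, together with $\norm{X_s^{1/2}}_{\cL(H)}^2=\norm{X_s}_{\cL(H)}\le\norm{X_s}$, yields exactly the estimate $\norm{X_s^{1/2}Q^{1/2}}^2\leq \norm{Q}_{\cL_1(H)}\norm{X_s}$ that you state (correctly) in your final paragraph and that the paper itself uses. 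With this one-line repair the remainder of your argument goes through unchanged and matches the paper's proof.
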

\begin{proof}
The fact that $X$ is progressive follows from the $\mathbb{F}$-adaptedness of
$X$ and Assumption \ref{assumption:cadlag-paths}. Moreover, it follows
from Theorem~\ref{thm:existence-affine-process} \ref{it:exp_bounds} and H\"older's inequality that
\begin{align*}
 \mathbb{E} \| X_t^{1/2} Q^{1/2} \|^2 
 &\leq \| Q \|_{\cL_1(H)} 
 \mathbb{E} \| X_t^{1/2} \|_{\cL(H)}^2
 \leq \| Q \|_{\cL_1(H)} 
 \mathbb{E} \| X_t \|
 \\ 
 & \leq \sqrt{M} \| Q \|_{\cL_1(H)} \e^{\omega t /2} 
 \sqrt{ \mathbb{E}\| X_0\|^2 +1}. 
\end{align*}
Standard theory on infinite dimensional SDEs
(see for instance~\cite[Section 6.1]{DZ92}) now yields the existence of a unique 
mild solution to \eqref{eq:Y} given by \eqref{eq:solution-Y}.
\end{proof}

\begin{definition}\label{def:joint_model}
Assume the setting described above, in particular, let $(b,B,m,\mu)$ satisfy
Assumption~\ref{def:admissibility} and let $X$ be the associated affine process. Moreover, let Assumption~\ref{assumption:cadlag-paths} hold and let $Y$ be given by~\eqref{eq:solution-Y}. 
Then we refer to the $H\times \cH^+$-valued process $Z=(Y,X)$ as the
\emph{joint stochastic volatility model with affine pure-jump variance} (and
with parameters $(b,B,m,\mu,Q,\cA)$ and initial value $(x,y)$). Note that the
process $(Z,(\Omega, \cF, \MF, \MP))$ is a (stochastically) weak solution to the following SDE in $H\times \cH$:
\begin{align}\label{eq:Z}
  \begin{cases}
   \D Z_{t}&= (\opb+\opA Z_{t})\, \D t+\opSigma(Z_{t})\D \opW_{t}+\D \opJ_{t}\,, \quad t\geq 0\,,\\
   Z_{0}&=(y,x)\in H\times \cH^+\,,
  \end{cases}
\end{align}
where $\opb, \opA, \opSigma, \opB$, and $\opJ$ are as follows
\begin{align*}
\opb\coloneqq\begin{bmatrix}
  0 \\
  b + \int_{\cHplus \cap \{\|\xi\| >1\}} \xi \,m (\D \xi)
\end{bmatrix},
\quad 
  \opA
  \begin{bmatrix}
    z_1 \\
    z_2
  \end{bmatrix}
  \coloneqq
  \begin{bmatrix}
    \cA z_1 \\
    B(z_2)+\int_{\cHplus\cap \set{\norm{\xi}> 1}}\xi\,\frac{\langle z_{2}, \mu(\D\xi)\rangle}{\norm{\xi}^{2}} 
  \end{bmatrix},
\end{align*}
\begin{align*} 
\opSigma(z)\coloneqq\begin{bmatrix}
    (z_{2})^{1/2} & 0 \\
    0 & 0 
  \end{bmatrix},\quad
\D\opW\coloneqq\begin{bmatrix}
    \D W^{Q} \\
    0
  \end{bmatrix}\,, \quad \text{ and }\quad
\D\opJ\coloneqq\begin{bmatrix}
    0 \\
    \D J
  \end{bmatrix},
\end{align*}
where $J$ is the purely discontinuous square-integrable martingale obtained from Proposition~\ref{prop:affine-semimartingale}.
\end{definition}

\begin{remark}\label{rem:rougher_noise}
The assumption that $W^{Q}$ is a $Q$-Wiener process can be weakend whilst maintaining all results presented in this article. Indeed, as $X$ itself is already $\cH^+$ valued, it suffices to assume that $Q\in \cL_2(H)$ (instead of $Q\in \cL_1(H)$) (see also the proof of Lemma~\ref{lem:integrand}).
\end{remark}

In order to show that our joint model is affine (see Theorem~\ref{thm:joint_process_affine} below), we need one further assumption. This assumption is also imposed in~\cite{BRS15}, see Proposition 3.2 of that article. 

\begin{assumption}{}{C}\label{def:joint-assumption}
There exists a positive and self-adjoint operator $D\in\cL(H)$ such that
    \begin{align*}
    X_{t}^{1/2}QX_{t}^{1/2}=D^{1/2}X_{t}D^{1/2}\,,\quad  \text{for all}\; t\geq 0.   
  \end{align*}
\end{assumption}

To the best of our knowledge, all examples for which
Assumption~\ref{def:joint-assumption} holds are such that $Q$ and $X_t$
commute for all $t\geq 0$. In fact, as commuting self-adjoint and
compact operators are jointly
diagonizable, this is difficult to ensure without assuming there exists a
fixed orthonormal basis $(e_n)_{n\in \N}$ of $H$ that forms the eigenvectors
of $Q$ and of $X_t$, $t\geq 0$. Note that this essentially reduces the state
space of $X$ to the cone of positive, square integrable sequences
$\ell_{2}^{+}$, i.e., we only model the eigenvalues of $X$, as the
eigenvectors are fixed, see also
Section~\ref{sec:state-depend-stoch-fixedONB}. In conclusion, Assumption~\ref{def:joint-assumption} is rather limiting. However, it can be circumvented if one considers a slightly different model, see Remarks~\ref{rem:joint_model_alt} and~\ref{rem:joint_model_whitenoise} below.

\begin{remark}\label{rem:joint_model_alt}
Assumption~\ref{def:joint-assumption} can be omitted if, instead of equation~\eqref{eq:Y}, one assumes that the process $Y$ in the joint model satisfies the following stochastic differential equation:
\begin{align}\label{eq:Y_alt}
  \begin{cases}
    \D Y_{t}=\cA Y_{t}\,\D t+D^{1/2} X_{t}^{1/2}\,\D W_{t}\,, \quad t\geq 0,\\
  Y_0 =y,  
  \end{cases}
\end{align}
where $W$ is an $H$-cylindrical Brownian motion (i.e., $\D W_t$ is white noise) and $D\in \cL_1(H)$
is positive and self-adjoint (in fact, $D \in \cHplus$ suffices, see Remark~\ref{rem:rougher_noise}). In this case, provided Assumptions~\ref{def:admissibility} and~\ref{assumption:cadlag-paths} hold, we have   
  \begin{align}\label{eq:int-condition-X_alt}
 \EX{\int_0^t\norm{D^{1/2} X^{1/2}_{s} }^2\D s}<\infty\,,
\end{align}
and
\begin{align}\label{eq:solution-Y_alt}
 Y_{t}=S(t)y+\int_{0}^{t}S(t-s) D^{1/2} X_{s}^{1/2}\D W_{s}\,, \quad  t\geq 0\,,
\end{align}
is the unique mild solution to \eqref{eq:Y_alt}, see also \cite[Chapter 4, Section 3]{DZ92}. Moreover,
Theorem~\ref{thm:joint_process_affine} remains valid: if $Y$ is given
by~\eqref{eq:Y_alt} and Assumptions~\ref{def:admissibility}
and~\ref{assumption:cadlag-paths} hold, we obtain \emph{exactly} the same
expression for $\EX{\E^{\langle  Y_t, u_1 \rangle_{H} - \langle X_t, u_2
  \rangle }}$. In particular the joint model involving~\eqref{eq:Y_alt} under
Assumptions~\ref{def:admissibility} and~\ref{assumption:cadlag-paths}
coincides with the joint model involving~\eqref{eq:Y} under
Assumptions~\ref{def:admissibility},~\ref{assumption:cadlag-paths},
and~\ref{def:joint-assumption}, in the sense that for every fixed time $t\geq
0$ the distribution of $(Y_{t},X_{t})$ is the same. We refer to Subsection~\ref{sec:state-depend-stoch-general} for an example of a joint model involving~\eqref{eq:Y_alt}.
\end{remark}

\begin{remark}\label{rem:joint_model_whitenoise}
If $(\cA,\dom(\cA))$ is the generator of an analytic semigroup and moreover $\cA^{-\alpha}\in \cL_4(H)$ (equivalently, $\cA^{-2\alpha} \in \cH$) for some $\alpha \in  [0,\frac{1}{2})$, then a mild solution to~\eqref{eq:Y} exists even if $W^{Q}$ is an $H$-cylindrical Brownian motion. These conditions are satisfied e.g.\ when $\cA$ is the Laplacian on $\R^d$ for $d\in \{1,2,3\}$. We refer to~\cite{DZ92} for details. \par 
Although this provides another way to circumvent Assumption~\ref{def:joint-assumption} (as $Q$ is the identity in this case), we will not investigate this setting any further: for the applications we have in mind $(\cA,\dom(\cA))$ fails to be the generator of an analytic semigroup. Note that to obtain the assertions of Theorem~\ref{thm:joint_process_affine} in this setting, one would have to adapt its proof: one would not only have to approximate the operator $\cA$ but also the noise.
\end{remark}
\section{The joint stochastic volatility model is affine}
\label{sec:affine-property}
In this section we present our main result, namely that the stochastic volatility model $Z=(Y,X)$ conform
Definition~\ref{def:joint_model} has the \emph{affine property}, see
Theorem~\ref{thm:joint_process_affine}. In particular, this means that we can
express the mixed Fourier-Laplace transform $\ME[ \e^{i\langle Y_t, u
  \rangle_H - \langle X_t, v\rangle}]$ ($u\in H, v\in \cH^+$) in terms of the
solution to \textit{generalised Riccati equations} associated to the model
parameters $(b,B,m,\mu)$, $\cA$ and $Q$ (respectively $D$). In the upcoming subsection
we discuss the well-posedness of these generalised Ricatti equations. Our main
result, Theorem~\ref{thm:joint_process_affine}, is contained and proven in Subsection~\ref{sec:stoch-volat-models}.
\subsection{Analysis of the associated generalised Riccati equations}\label{sec:assoc-gener-ricc}
Let us fix an admissible parameter set $(b,B,m,\mu)$ conform~Assumptions~\ref{def:admissibility} and a positive self-adjoint $D \in \cL(H)$. 
Define $F\colon \cHplus\to \MR$ and $R\colon \I H \times \cHplus \to \cH$, respectively as
\begin{align}
 F(u)&=\langle b, u\rangle-\int_{\cHpluso}\big(\E^{-\langle \xi, u\rangle}-1 +\langle \chi(\xi), u\rangle\big)m(\D \xi), \label{eq:F}\\
 R(h,u)&= B^{*}(u)-\tfrac{1}{2}D^{1/2}h\otimes
    D^{1/2}h-\int_{\cHpluso}\big(\E^{-\langle
    \xi,u\rangle}-1+\langle \chi(\xi), u\rangle\big)\frac{\mu(\D \xi)}{\norm{\xi}^{2}}.  \label{eq:R-intro}
\end{align}
Let $(\cA,\dom(\cA))$ be the generator of a strongly continuous semigroup
$(S(t))_{t\geq 0}$
and let $(\cA^*, \dom(\cA^*))$ be its adjoint. It is well known that $(\cA^{*},\dom(\cA^{*}))$
  generates the strongly continuous semigroup $(S^{*}(t))_{t\geq 0}$ on $H$, see for instance \cite[Theorem 4.3]{Gol85}. 
      
Let $T \in \MR^{+}$, $u_{1}\in \I H$ and $u_{2}\in\cHplus$. We consider the following system of differential
equations, known as \textit{generalised Riccati equations}
\begin{subequations}
\begin{align}
   \,\frac{\partial\Phi}{\partial t}(t,u)&=F(\psi_{2}(t,u)), &\, 0< t \leq T, &\quad\Phi(0,u)=0,\label{eq:Riccati-phi-psi-1-1}\\
    \,\psi_{1}(t,u)&=u_{1}-\I \cA^{*}\left(\I\int_{0}^{t}\psi_{1}(s,u)\D
                   s\right), &\, 0< t \leq T, &\quad\psi_{1}(0,u)=u_{1}, \label{eq:Riccati-phi-psi-1-2}\\
    \,\frac{\partial \psi_{2}}{\partial t}(t,u)&=R(\psi_{1}(t,u),
    \psi_{2}(t,u)), &\, 0< t \leq T, & \quad
                                                  \psi_{2}(0,u)=u_{2}.\label{eq:Riccati-phi-psi-1-3}
                                                  \end{align}
\end{subequations}
 
\begin{definition}
Let $u=(u_{1},u_{2})\in \I H\times \cHplus$. We say that $(\Phi(\cdot, u),\Psi(\cdot, u)) \coloneqq (\Phi(\cdot, u),(\psi_{1}(\cdot, u),\psi_{2}(\cdot, u)))\colon [0,T] \to \R \times \I H\times  \cH$ is a \emph{mild solution} to ~\eqref{eq:Riccati-phi-psi-1-1}-\eqref{eq:Riccati-phi-psi-1-3} if $\Phi(\cdot,u)\in C^{1}([0,T];\MRplus)$, 
  $\psi_{1}(\cdot,u)\in C([0,T];\I H)$, $\psi_{2}(\cdot,u)\in
  C^{1}([0,T];\cHplus)$ and the map $(\Phi(\cdot,u),\Psi(\cdot,u))$ satisfies \eqref{eq:Riccati-phi-psi-1-1}-\eqref{eq:Riccati-phi-psi-1-3}.
\end{definition}
In the following proposition we show for every $u=(u_{1},u_{2})\in \I
H\times\cHplus$ the existence of a unique mild solution $(\Phi(\cdot,u),\Psi(\cdot,u))$ to \eqref{eq:Riccati-phi-psi-1-1}-\eqref{eq:Riccati-phi-psi-1-3}. 
\begin{proposition}\label{prop:existence-mild-sol}
Let $(b,B,m,\mu)$ be an admissible parameter set conform~Assumption~\ref{def:admissibility}, let
$(\mathcal{A},\operatorname{dom}(\mathcal{A}))$ be the generator of a strongly
continuous semigroup, and let $D\in\cL(H)$ be positive and self-adjoint.
Then for every $u \in \I H \times \cH^+$ and $T\geq 0$ there
exists a unique mild solution $(\Phi(\cdot, u),\Psi(\cdot, u))$ to
\eqref{eq:Riccati-phi-psi-1-1}-\eqref{eq:Riccati-phi-psi-1-3} on $[0,T]$.
\end{proposition}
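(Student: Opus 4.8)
The plan is to prove existence and uniqueness by decoupling the system and handling each equation in sequence, exploiting its triangular structure: equation \eqref{eq:Riccati-phi-psi-1-2} for $\psi_1$ is self-contained, then $\psi_2$ depends on $\psi_1$ through \eqref{eq:Riccati-phi-psi-1-3}, and finally $\Phi$ is obtained by mere integration in \eqref{eq:Riccati-phi-psi-1-1} once $\psi_2$ is known.

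First I would solve the linear equation \eqref{eq:Riccati-phi-psi-1-2} for $\psi_1$. This is a mild formulation of the abstract Cauchy problem $\partial_t \psi_1 = \cA^* \psi_1$ with $\psi_1(0)=u_1$; more precisely, since $(\cA^*,\dom(\cA^*))$ generates the strongly continuous semigroup $(S^*(t))_{t\ge 0}$, the unique continuous solution is simply $\psi_1(t,u)=S^*(t)u_1$. One checks this lies in $C([0,T];\I H)$: the factor $\I$ is preserved because $\cA^*$ maps the real generator structure appropriately, and strong continuity of $S^*$ gives continuity in $t$. Uniqueness here is standard semigroup theory.

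Next I would treat the nonlinear equation \eqref{eq:Riccati-phi-psi-1-3} for $\psi_2$, now viewing $\psi_1(\cdot,u)$ as a known continuous forcing term, so that $R(\psi_1(t,u),\cdot)$ becomes a time-dependent vector field on $\cH$. The strategy is a Picard--Lindel\"of / Banach fixed-point argument on $C([0,T'];\cH)$ for small $T'$, followed by a continuation argument to reach all of $[0,T]$. The essential analytic input is a local Lipschitz estimate on $u_2 \mapsto R(h,u_2)$ uniformly for $u_2$ in bounded subsets of $\cHplus$; this comes from differentiating the integral term in \eqref{eq:R-intro} and using the admissibility condition \ref{item:affine-kernel} of Assumption~\ref{def:admissibility}, specifically the integrability of $\langle\chi(\xi),u\rangle\,\langle\mu(\D\xi),x\rangle/\norm{\xi}^2$, together with the second-moment bound $\int_{\cHpluso}\norm{\xi}^2\,m(\D\xi)<\infty$ to control the measure $\mu$. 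The hard part will be two-fold: establishing \emph{global} existence (a priori bounds preventing blow-up in finite time, likely via a Gronwall estimate once one shows $R$ grows at most linearly) and, crucially, the \emph{invariance} of the cone, i.e.\ showing $\psi_2(t,u)\in\cHplus$ for all $t$. The latter is where I expect the real difficulty: following the strategy announced in the introduction, I would invoke a quasi-monotonicity argument à la \cite{Dei77, Mar76}, verifying that the vector field $R(\psi_1(t,u),\cdot)$ points inward (or is tangent) at the boundary of the self-dual cone $\cHplus$. Concretely, for $u_2\in\partial\cHplus$ and any $x\in\cHplus$ with $\langle u_2,x\rangle=0$, one must check $\langle R(\psi_1,u_2),x\rangle\ge 0$; this is precisely the content of the admissibility inequalities in \ref{item:affine-kernel} and \ref{item:linear-operator}, with the extra term $-\tfrac12 D^{1/2}\psi_1\otimes D^{1/2}\psi_1$ posing no obstruction since, for imaginary $\psi_1$, its contribution interacts correctly with the sign conventions on the complexified cone.

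Finally, with $\psi_2(\cdot,u)\in C^1([0,T];\cHplus)$ in hand, I would define $\Phi(t,u)=\int_0^t F(\psi_2(s,u))\,\D s$, which directly solves \eqref{eq:Riccati-phi-psi-1-1} with $\Phi(0,u)=0$. Continuity of $s\mapsto F(\psi_2(s,u))$ follows from continuity of $\psi_2$ and dominated convergence applied to the integral in \eqref{eq:F} (again using the second-moment condition on $m$), so $\Phi(\cdot,u)\in C^1([0,T];\MRplus)$; that $\Phi$ is real-valued and nonnegative uses the admissibility constraint $\langle b,v\rangle-\int_{\cHpluso}\langle\chi(\xi),v\rangle\,m(\D\xi)\ge 0$ from \ref{item:m-2moment}. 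Uniqueness of the full triple then follows from uniqueness at each stage. I would organize the write-up so that the two genuinely substantive points — the global-in-time a priori bound and the cone-invariance via quasimonotonicity — are isolated as the core of the argument, with the surrounding fixed-point and integration steps treated as routine.
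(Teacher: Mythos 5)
Your skeleton matches the paper's: solve \eqref{eq:Riccati-phi-psi-1-2} by semigroup theory ($\psi_1(t,u)=S^*(t)u_1$), treat \eqref{eq:Riccati-phi-psi-1-3} as a time-inhomogeneous ODE on the cone with existence and cone-invariance coming from local Lipschitz continuity plus quasi-monotonicity (the paper invokes \cite[Chapter 6, Theorem 3.1]{Mar76} directly rather than a separate Picard iteration, but that is cosmetic), and then integrate to get $\Phi$. Your observation that the quadratic term contributes $+\tfrac12(D^{1/2}S^*(t)(\I u_1))^{\otimes 2}\in\cHplus$ and hence does not disturb quasi-monotonicity is also correct and is exactly how the paper handles it.

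The genuine gap is in your mechanism for global existence. You propose a Gronwall estimate ``once one shows $R$ grows at most linearly,'' but $R(h,\cdot)$ does \emph{not} grow linearly on $\cHplus$: for small $\xi$ the integrand $\bigl(\E^{-\langle\xi,u\rangle}-1+\langle\chi(\xi),u\rangle\bigr)/\norm{\xi}^2$ behaves like $\tfrac12\langle\xi,u\rangle^2/\norm{\xi}^2$, so the $\mu$-integral in \eqref{eq:R-intro} grows quadratically in $\norm{u}$ (and $\tilde R$ is only \emph{locally} Lipschitz). A norm-Gronwall bound is blind to the fact that this quadratic term has the favorable sign (it lies in $-\cHplus$), so it cannot rule out finite-time blow-up; compare $\dot y=y^2$. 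The paper closes this gap with a comparison argument that exploits the sign: it truncates the small jumps, setting $m^{(k)}(\D\xi)=\mathbf{1}_{\{\norm{\xi}>1/k\}}m(\D\xi)$ and $\mu^{(k)}(\D\xi)=\mathbf{1}_{\{\norm{\xi}>1/k\}}\mu(\D\xi)$, for which $\tilde R^{(k)}$ \emph{is} globally Lipschitz (by \cite[Lemma 3.3]{CKK20}) so that the truncated solutions $\psi_2^{(k)}(\cdot,u)$ exist on all of $[0,T]$; it then shows, as in \cite[Proposition 3.7]{CKK20}, that $(\psi_2^{(k)}(t,u))_{k\in\MN}$ is non-increasing in the cone order and dominates $\psi_2(t,u)\geq 0$, whence $\norm{\psi_2(t,u)}\leq\norm{\psi_2^{(1)}(t,u)}$ uniformly on the maximal interval of existence, forcing $t_{\max}=T$. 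You correctly isolated the a priori bound as one of the two substantive points, but the concrete argument you sketch for it would fail; some comparison or monotonicity device of this kind is needed.
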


\begin{proof}
We set for $k \in \mathbb{N}$, 
$$m^{(k)}(\D \xi) = \mathbf{1}_{\{\|\xi\|> 1/k\}} m(\D \xi) \quad \mbox{and}  \quad \mu^{(k)}(\D \xi) = \mathbf{1}_{\{\|\xi\|> 1/k\}} \mu(\D \xi)\,.$$
Then for each $k\in\MN$ we introduce $F^{(k)}\colon \cH^+ \to \R$ and $R^{(k)}\colon \I H \times \cHplus \to \cH$ defined respectively as
\begin{align}
 F^{(k)}(u)&=\langle b, u\rangle-\int_{\cHpluso}\big(\E^{-\langle \xi, u\rangle}-1 +\langle \chi(\xi), u\rangle\big)m^{(k)}(\D \xi), \label{eq:Fk}\\
 R^{(k)}(h,u)&=\tilde{R}^{(k)}(u) -\tfrac{1}{2}D^{1/2}h\otimes D^{1/2}h\,, \label{eq:Rk}
\end{align}
where $\tilde{R}^{(k)}(u) =B^{*}(u) -\int_{\cHpluso}\big(\E^{-\langle
    \xi,u\rangle}-1+\langle \chi(\xi), u\rangle\big)\frac{\mu^{(k)}(\D \xi)}{\norm{\xi}^{2}}$ , $u\in \cHplus$.
Consider for $t\geq 0$,
\begin{subequations}\label{eq:Riccati-phi-psi-k}
\begin{align}
 \frac{\partial\Phi^{(k)}}{\partial t}(t,u)&=F^{(k)}(\psi^{(k)}_{2}(t,u)), & 0< t \leq T,  
 &\quad \Phi^{(k)}(0,u)=0, \label{eq:Riccati-phi-k}\\
    \psi_{1}(t,u)&=u_{1}-\I\cA^{*}\left(\I\int_{0}^{t}\psi_{1}(s,u)\D s\right), & 0< t \leq T, &\quad \psi_{1}(0,u)=u_{1},\label{eq:Riccati-psi1-k}\\
    \frac{\partial \psi^{(k)}_{2}}{\partial t}(t,u)&=R^{(k)}(\psi_{1}(t,u), \psi^{(k)}_{2}(t,u)), & 0< t \leq T,  &\quad
                                                  \psi^{(k)}_{2}(0,u)=u_{2}\,. \label{eq:Riccati-psi2-k}
                                                  \end{align}
  \end{subequations}                                 
Standard semigroup theory (see, e.g.,~\cite[Chapter II, Lemma 1.3]{EN00})
ensures that the unique mild solution to \eqref{eq:Riccati-psi1-k} is given by 
$$\psi_1(t,(u_1,u_2)) =- \I S^*(t) (\I u_1)\,, \qquad t \in [0,T]$$ and $\psi_1(\cdot,u) \in C([0,T];\I H)$.
Plugging $\psi_1(t,u)$ into \eqref{eq:Riccati-psi2-k}, yields
\begin{align*}
 \frac{\partial \psi^{(k)}_{2}}{\partial t}(t,u) &= \tilde{R}^{(k)}(\psi_2^{(k)}(t,u))
 +
 \tfrac{1}{2}D^{1/2} S^*(t) (\I u_1)\otimes
    D^{1/2} S^*(t) (\I u_1)\,.
\end{align*}
For $k\in \N$, $u_1 \in \I H$, $t\in [0,T]$, define $\cR_{u_1}^{(k)} (t, \cdot)\colon \cH^+ \to \cH$, by 
$$\cR^{(k)}_{u_1}(t,h) = 
\tilde{R}^{(k)}(h)
+
\tfrac{1}{2}D^{1/2}S^*(t) (\I u_1)\otimes
D^{1/2}S^*(t)(\I u_1).$$
By \cite[Lemma 3.3]{CKK20} the function
$\tilde{R}^{(k)}$ is Lipschitz continuous on $\cHplus$ and since the term
$\frac{1}{2}D^{1/2}S^*(t) (\I u_1)\otimes D^{1/2}S^*(t)(\I u_1)$ does not
depend on $h$, we conclude that for every $t\in [0,T]$ and $u_{1}\in \I H$ the
function  $\cR^{(k)}_{u_1}(t, \cdot)$ is Lipschitz continuous on $\cHplus$ as well,
with the same Lipschitz constant as $\tilde{R}^{(k)}$. By
\cite[Lemma 3.2]{CKK20}, for every $k\in\MN$ the function $\tilde{R}^{(k)}$ is
quasi-monotone with respect to $\cHplus$ (see also \cite[Definition
3.1]{CKK20} for the notion of quasi-monotonicity, and see~\cite[Lemma 4.1 and Example 4.1]{Dei77} for relevant equivalent definitions). 
From this we conclude that $\cR^{(k)}_{u_{1}}(t,\cdot)$ is also quasi-monotone
for every $t\in[0,T]$ and $u_{1}\in \I H$. Moreover, the growth condition
\begin{align*}
 \norm{\cR^{(k)}_{u_{1}}(t,u_{2})}\leq \left( \| B
  \|_{\cL(\cH)}+ 2k\| \mu(\cH^+ \setminus \{0\})\|
  \right)\norm{u_2}+\tfrac{1}{2} M^2\E^{2 w t}\norm{D^{1/2}}_{\cL(H)}^2\norm{u_{1}}_{H}^{2},
\end{align*}
for every $t\in [0,T]$,$u_{1}\in \I H$ holds, where the constants $M\geq 1$ and
$w\in\MR$ are such that $\norm{S^{*}(t)}_{\cL(H)}\leq M\E^{w t}$, for all $t\geq 0$ which exist for every
strongly continuous semigroup, see \cite[Chapter I, Proposition 5.5]{EN00}.
Thus the conditions of \cite[Chapter 6, Theorem 3.1 and Proposition
3.2]{Mar76} are satisfied and we conclude from this the existence of a unique solution
$\psi_{2}^{(k)}(\cdot,u)$ on $[0,T]$ to the equation
\begin{align*}
\frac{\partial \psi^{(k)}_{2}}{\partial t}(t,(u_{1},u_{2}))&=\cR^{(k)}_{u_{1}}(t,\psi^{(k)}_{2}(t,u)), 
\end{align*}
such that $\psi^{(k)}_{2}(0,(u_{1},u_{2}))=u_{2}$, hence
$\psi^{(k)}_{2}(\cdot,u)$ is the unique solution to
equation~\eqref{eq:Riccati-psi2-k}. By setting $\Phi^{(k)}(t,u)=
\int_{0}^{t}F^{(k)}(\psi_{2}^{(k)}(s,u))\D s$ and the continuity of $F^{(k)}$
it follows that
$(\Phi^{(k)}(\cdot,u),\psi_{1}(\cdot,u),\psi_{2}^{(k)}(\cdot,u))$ is the
unique mild solution to
equations~\eqref{eq:Riccati-phi-k}-\eqref{eq:Riccati-psi2-k} on $[0,T]$.\par{}
Now, let $\cR_{u_{1}}\colon [0,T]\times \cHplus\to \cH$ be defined as the
$\cR^{(k)}_{u_{1}}$ above, only with $\tilde{R}^{(k)}$ replaced by $\tilde{R}$.
By a similar reasoning as above and by \cite[Lemma 3.2 and Remark 3.4]{CKK20},
we conclude that $\cR_{u_{1}}(t,\cdot)$ is locally Lipschitz continuous on
$\cHplus $ and quasi-monotone with respect to $\cHplus$ for every $t\in [0,T]$
and $u_{1}\in\I H$. Thus by \cite[Chapter 6, Theorem 3.1]{Mar76} for every
$t_{0}\leq T$ and $u_{2}\in\cHplus$, there exists a $t_{0}<t_{\max}\leq T$ and
a mapping $\psi_{2,t_{0}}(\cdot,u)\colon [t_{0},t_{\max})\to\cHplus$ such that 
\begin{align*}
  \frac{\partial \psi_{2,t_{0}}}{\partial
  t}(t,(u_{1},u_{2}))&=\cR_{u_{1}}(t,\psi_{2}(t,(u_{1},u_{2}))),\quad\text{for } t\in [t_{0},t_{\max}),
\end{align*}
and $\psi_{2,t_{0}}(t_{0},(u_{1},u_{2}))=u_{2}$. The function $\cR_{u_{1}}$
maps bounded sets of $[0,\infty)\times \cHplus$ into bounded sets of $\cH$,
thus by \cite[Chapter 6, Proposition 1.1]{Mar76} it suffices to show that
$t\mapsto\psi_{2}(t,u)$ is bounded throughout its lifetime, to conclude that
$t_{\max}=T$. By arguing as in the proof of \cite[Proposition 3.7]{CKK20}
we conclude that for every $t\geq 0$ and $(u_{1},u_{2})\in\I H\times\cHplus$
the sequence $(\psi_{2}^{(k)}(t,u))_{k\in\MN}$ is a non-increasing sequence in
$\cHplus$ converging to $\psi_{2}(t,u)\geq 0$ for $t\in [0,t_{\max})$, hence
$$\norm{\psi_{2}(t,u)}\leq\norm{\psi^{(k)}_{2}(t,u)}\leq\norm{\psi^{(1)}_{2}(t,u)},$$
where the
right-hand side is bounded on the whole $[0,T]$. Thus  we conclude that 
$t_{\max}=T$ and $\psi_{2}(\cdot,u)$ is the unique solution to~\eqref{eq:Riccati-phi-psi-1-3}. Then again by inserting
$\psi_{2}(\cdot,u)$ into~\eqref{eq:Riccati-phi-psi-1-1} and
the continuity of $F$, we conclude the existence of a unique solution
$\Phi(\cdot,u)$ of~\eqref{eq:Riccati-phi-psi-1-1} on $[0,T]$, and thus also of
$(\Phi(\cdot,u),\Psi(\cdot,u))$, the unique mild solution to \eqref{eq:Riccati-phi-psi-1-1}-\eqref{eq:Riccati-phi-psi-1-3} on $[0,T]$.
\end{proof}
\subsection{The affine property of our joint stochastic volatility model}\label{sec:stoch-volat-models}
Exploiting the existence of a solution to the generalised Riccati equations
\eqref{eq:Riccati-phi-psi-1-1}-\eqref{eq:Riccati-phi-psi-1-3}, we show in the
following theorem that our joint stochastic volatility model $Z=(X,Y)$ conform
Definition \ref{def:joint_model} has indeed the affine property.
\begin{theorem}\label{thm:joint_process_affine}
 Let $Z=(Y,X)$ be the stochastic volatility model conform
 Definition~\ref{def:joint_model} and let
 Assumption~\ref{def:joint-assumption} hold. Moreover, let
 $u=(u_{1},u_{2})\in \I H\times\cH$ and $(\Phi(\cdot,u),(\psi_1(\cdot,u),
 \psi_2(\cdot,u)))$ be the mild solution to the generalised Riccati equations~\eqref{eq:Riccati-phi-psi-1-1}-\eqref{eq:Riccati-phi-psi-1-3}, the existence of which is guaranteed by Proposition~\ref{prop:existence-mild-sol}. Then for all $t\in \R^+$, it holds that
  \begin{align}
    \label{eq:extended-affine-formula}
    \mathbb{E} \left[{\E^{\langle Y_{t}, u_{1}\rangle_{H}-\langle X_{t},
    u_{2}\rangle}}\right]=\E^{-\Phi(t,u)+\langle y, \psi_{1}(t,u)\rangle_{H}-\langle x, \psi_{2}(t,u)\rangle}. 
  \end{align}    
\end{theorem}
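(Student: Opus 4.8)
The plan is to circumvent the unboundedness of $\cA$—which prevents $Y$ from being a semimartingale and thus blocks a direct application of It\^o's formula—by approximating $\cA$ with its Yosida approximation $\cA_{n}\df n\cA(n-\cA)^{-1}\in\cL(H)$ and letting $Y^{(n)}$ solve \eqref{eq:Y} with $\cA$ replaced by $\cA_{n}$. Since $\cA_{n}$ is bounded, $Y^{(n)}=y+\int_{0}^{\cdot}\cA_{n}Y^{(n)}_{s}\D s+\int_{0}^{\cdot}X_{s}^{1/2}\D W^{Q}_{s}$ is a genuine $H$-valued semimartingale, and together with $X$ (a semimartingale by Proposition~\ref{prop:affine-semimartingale}) the pair $(Y^{(n)},X)$ is an $H\times\cH$-valued semimartingale whose characteristics I can read off from \eqref{eq:canonical-rep-X} and from the angle bracket $\int_{0}^{t}X_{s}^{1/2}QX_{s}^{1/2}\D s$ of the martingale part $\int_{0}^{\cdot}X_{s}^{1/2}\D W^{Q}_{s}$ of $Y^{(n)}$. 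Denoting by $(\Phi^{(n)},\psi_{1}^{(n)},\psi_{2}^{(n)})$ the mild solution of \eqref{eq:Riccati-phi-psi-1-1}--\eqref{eq:Riccati-phi-psi-1-3} with $\cA$ replaced by $\cA_{n}$ (so that $\psi_{1}^{(n)}(t,u)=-\I S_{n}^{*}(t)(\I u_{1})$ with $S_{n}(t)=\E^{t\cA_{n}}$), I would fix $T>0$ and introduce the candidate process
\begin{align*}
N_{t}^{(n)}=\exp\!\Big(-\Phi^{(n)}(T-t,u)+\langle Y_{t}^{(n)},\psi_{1}^{(n)}(T-t,u)\rangle_{H}-\langle X_{t},\psi_{2}^{(n)}(T-t,u)\rangle\Big),\quad t\in[0,T].
\end{align*}

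Next I would apply It\^o's formula (e.g.\ \cite[Theorem 27.2]{Me82}) to $N^{(n)}$. The $\D t$-drift collects three groups of terms: the deterministic time-derivatives of $\Phi^{(n)},\psi_{1}^{(n)},\psi_{2}^{(n)}$, which by the Riccati equations equal $F(\psi_{2}^{(n)})$, $-\langle Y^{(n)},\cA_{n}^{*}\psi_{1}^{(n)}\rangle_{H}$ and $\langle X,R(\psi_{1}^{(n)},\psi_{2}^{(n)})\rangle$ respectively; the first- and second-order contributions from $Y^{(n)}$; and the drift part of the $X$-contributions, which by Theorem~\ref{thm:existence-affine-process} is exactly $\cG$ applied to $\E^{-\langle\cdot,\psi_{2}^{(n)}\rangle}$ (with the compensated jump part $\mu^{X}-\nu^{X}$ giving a local martingale). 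The drift $\langle \cA_{n}Y^{(n)},\psi_{1}^{(n)}\rangle_{H}$ cancels the $\cA_{n}^{*}$-term; crucially, Assumption~\ref{def:joint-assumption} turns the second-order $Y^{(n)}$-term into $\tfrac12\langle X,D^{1/2}\psi_{1}^{(n)}\otimes D^{1/2}\psi_{1}^{(n)}\rangle$, which cancels the matching term hidden in $R$ (see \eqref{eq:R-intro}); and the definitions \eqref{eq:F} and \eqref{eq:R-intro} of $F$ and $R$ are precisely tailored so that the remaining $b$-, $B$-, $m$- and $\mu$-terms cancel against $\cG$. Hence the drift vanishes and $N^{(n)}$ is a local martingale. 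Since $u_{1}\in\I H$ and $\psi_{2}^{(n)}(\cdot,u)$ stays in the self-dual cone $\cHplus$ (Proposition~\ref{prop:existence-mild-sol}), the modulus $|N^{(n)}_{t}|=\exp(-\Phi^{(n)}(T-t,u))\,\E^{-\langle X_{t},\psi_{2}^{(n)}(T-t,u)\rangle}$ is bounded by a deterministic constant, so $N^{(n)}$ is a true martingale and $\mathbb{E}[N_{T}^{(n)}]=N_{0}^{(n)}$ yields \eqref{eq:extended-affine-formula} with $(\Phi,\psi_{1},\psi_{2})$ replaced by $(\Phi^{(n)},\psi_{1}^{(n)},\psi_{2}^{(n)})$.

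It remains to pass to the limit $n\to\infty$ on both sides. For the right-hand side I would use that $S_{n}^{*}(t)\to S^{*}(t)$ strongly and uniformly on $[0,T]$, whence $\psi_{1}^{(n)}\to\psi_{1}$ in $C([0,T];\I H)$; since the vector field of \eqref{eq:Riccati-phi-psi-1-3} is locally Lipschitz and quasi-monotone (as in Proposition~\ref{prop:existence-mild-sol}) and the solutions $\psi_{2}^{(n)}$ satisfy a priori bounds on $[0,T]$ that are uniform in $n$ (from the growth estimate together with $\sup_{n}\sup_{t\leq T}\norm{S_{n}^{*}(t)}_{\cL(H)}<\infty$), continuous dependence of ODE solutions on the data gives $\psi_{2}^{(n)}(T,u)\to\psi_{2}(T,u)$, and then $\Phi^{(n)}(T,u)\to\Phi(T,u)$ by continuity of $F$. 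For the left-hand side, standard Yosida-approximation estimates for stochastic convolutions (see \cite[Section 6.1]{DZ92} and \eqref{eq:int-condition-X}) give $Y_{T}^{(n)}\to Y_{T}$ in $L^{2}(\Omega;H)$, hence in probability, and since $\E^{\langle Y_{T}^{(n)},u_{1}\rangle_{H}-\langle X_{T},u_{2}\rangle}$ is bounded in modulus by $1$, dominated convergence gives convergence of the expectations; identifying both limits proves \eqref{eq:extended-affine-formula}. The main obstacle I anticipate is precisely this final limit: one must control the two approximations simultaneously and verify that the convergence of $\psi_{1}^{(n)}$ propagates through the Riccati flow uniformly on $[0,T]$, since the coupling of $Y$ into the $\psi_{2}$-equation enters quadratically through $D^{1/2}\psi_{1}^{(n)}\otimes D^{1/2}\psi_{1}^{(n)}$.
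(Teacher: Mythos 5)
Your proposal is correct and follows essentially the same route as the paper's own proof: Yosida approximation of $\cA$, It\^o's formula applied to the exponential-affine candidate $f_u^{(n)}(t,Y_t^{(n)},X_t)$ with the drift cancelling via the Riccati equations and Assumption~\ref{def:joint-assumption}, boundedness to upgrade from local to true martingale, and the same two-sided limit passage (stochastic convergence of $Y^{(n)}$ plus uniform convergence of $(\Phi^{(n)},\psi_1^{(n)},\psi_2^{(n)})$, the latter being Proposition~\ref{prop:uniform_conv_Yosida_Riccati} in the paper, proved via~\cite[Chapter 6, Theorem 3.4]{Mar76} exactly as you suggest).
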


In applications, we are usually interested in distributional properties of the
process $(Y_{t})_{t\geq 0}$. Setting $u_{2}=0$ in
equation~\eqref{eq:extended-affine-formula} we obtain a quasi-explicit formula for the
characteristic function of $Y_{t}$ for $t\geq 0$. Due to its importance we
state it as a (trivial) corollary of Proposition~\ref{thm:joint_process_affine}:
\begin{corollary}\label{cor:solution-weak-strong}
Let the assumption of Theorem~\ref{thm:joint_process_affine} hold. Then the characteristic function of the process $Y$ is exponential-affine in its
  initial value $y\in H$ and the initial value $x\in\cHplus$ of the variance
  process $X$, more specifically, for all $t\geq 0$ and $u_{1}\in \I H$ we have:
  \begin{align}\label{eq:affine-formula-Y}
    \EX{\E^{\langle Y_{t},u_{1}\rangle_{H}}}=\E^{-\Phi(t,(u_{1},0))+\langle y,\psi_{1}(t,(u_{1},0))\rangle_{H}-\langle
    x, \psi_{2}(t,(u_{1},0))\rangle}.
  \end{align}
\end{corollary}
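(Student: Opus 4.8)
The plan is to obtain \eqref{eq:affine-formula-Y} as an immediate specialization of the joint transform formula \eqref{eq:extended-affine-formula} of Theorem~\ref{thm:joint_process_affine} to the degenerate choice $u_{2}=0$. First I would observe that $0\in\cHplus\subseteq\cH$, so for any $u_{1}\in\I H$ the pair $u=(u_{1},0)$ is a legitimate argument of Theorem~\ref{thm:joint_process_affine}. In particular, Proposition~\ref{prop:existence-mild-sol} (applied with $u=(u_{1},0)$, which lies in $\I H\times\cHplus$) guarantees the existence and uniqueness of the associated mild solution $(\Phi(\cdot,(u_{1},0)),(\psi_{1}(\cdot,(u_{1},0)),\psi_{2}(\cdot,(u_{1},0))))$ to the generalised Riccati equations \eqref{eq:Riccati-phi-psi-1-1}--\eqref{eq:Riccati-phi-psi-1-3} on $[0,T]$ for every $T\geq 0$, so every object appearing on the right-hand side of \eqref{eq:affine-formula-Y} is well defined.

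Next I would invoke \eqref{eq:extended-affine-formula} with this choice of $u$. Its left-hand side reads $\EX{\E^{\langle Y_{t},u_{1}\rangle_{H}-\langle X_{t},0\rangle}}$, and since $\langle X_{t},0\rangle=0$ for every $t\geq 0$, the exponential collapses to the characteristic function $\EX{\E^{\langle Y_{t},u_{1}\rangle_{H}}}$ of $Y_{t}$. Its right-hand side becomes $\E^{-\Phi(t,(u_{1},0))+\langle y,\psi_{1}(t,(u_{1},0))\rangle_{H}-\langle x,\psi_{2}(t,(u_{1},0))\rangle}$, which is exactly the right-hand side of \eqref{eq:affine-formula-Y}. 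Equating the two sides gives the claimed identity for all $t\geq 0$ and $u_{1}\in\I H$, and simultaneously exhibits the characteristic function of $Y_{t}$ as exponentially affine in the initial data $y\in H$ and $x\in\cHplus$.

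Since all the substantive work---the semimartingale representation of $X$ (Proposition~\ref{prop:affine-semimartingale}), the well-posedness of the Riccati system (Proposition~\ref{prop:existence-mild-sol}), and the derivation of the joint affine transform itself (Theorem~\ref{thm:joint_process_affine})---has already been carried out, there is essentially no obstacle here. The only point deserving a word of care is the admissibility of the degenerate argument $u_{2}=0$, which causes no issue precisely because $\cHplus$ is a cone and hence contains $0$; this is why the result is recorded as a trivial corollary.
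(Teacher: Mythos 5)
Your proof is correct and is exactly the argument the paper intends: the paper records this as a trivial corollary of Theorem~\ref{thm:joint_process_affine} obtained by setting $u_{2}=0$ (legitimate since $0\in\cHplus$), and you have merely spelled out that substitution, including the collapse $\langle X_{t},0\rangle=0$.
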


In order to prove Theorem~\ref{thm:joint_process_affine}, we first consider 
the joint process $(Y^{(n)},X)$ obtained by replacing $\mathcal{A}$ in~\eqref{eq:Y} by its Yosida approximation $\mathcal{A}^{(n)}:=n\mathcal{A}(nI-\mathcal{A})^{-1}$. 
The use of the approximation will allow us to exploit the semimartingale theory and to apply the It\^o formula and standard techniques in order to show that the approximating process $(Y^{(n)}, X)$ is affine. 
Then we study the affine property for the limiting process (see \eqref{eq:Yosida_OU_converges} below), when $n$ goes to $\infty$. 

Given the assumptions of Lemma \ref{lem:integrand}, we know that inequality \eqref{eq:int-condition-X} holds. Therefore from standard theory on infinite dimensional SDEs (\cite[Proposition 6.4]{DZ92}) we know there exists a continuous adapted process $Y^{(n)}\colon [0,\infty)\times \Omega \rightarrow H$ such that 
\begin{align}\label{eq:approximating-Y}
Y^{(n)}_{t}=y+\int_{0}^{t} \cA^{(n)} Y^{(n)}_{s}\,\D s+ \int_{0}^{t} X_{s}^{1/2}\,\D W^{Q}_{s}\,, \quad t\geq 0.
\end{align}
Moreover,~\cite[Proposition 7.5]{DZ92} ensures that
\begin{equation}\label{eq:Yosida_OU_converges}
\lim_{n \rightarrow \infty} 
\mathbb{E} \left[ 
\sup_{0 \leq t \leq T} \| Y^{(n)}_t -Y_t \|^2_{H} 
\right]= 0\,.
\end{equation}
See also \cite[Theorem 5.1, Definition 2.6]{CoxHausenblas} where convergence rates are obtained for Yosida approximations of SPDEs in the case the linear part of the drift is the generator of an analytic semigroup, e.g., a Laplacian.\par 
Regarding the corresponding Riccati equations, we have the following 
result:
\begin{proposition}\label{prop:uniform_conv_Yosida_Riccati}
Let $(b,B,m,\mu)$ satisfy Assumption~\ref{def:admissibility},
let $(\mathcal{A},\dom(\mathcal{A}))$ be the generator of a strongly
continuous semigroup, let $D\in \cL(H)$ be a positive self-adjoint operator, and let $u\in iH\times \cH^+$.
Moreover, let $(\Phi(\cdot,u),(\psi_1(\cdot,u), \psi_2(\cdot,u)))$ be the mild
solution to the generalised Riccati
equation~\eqref{eq:Riccati-phi-psi-1-1}-\eqref{eq:Riccati-phi-psi-1-3}, and
for $n\in\MN$,
let $(\Phi^{(n)}(\cdot,u),(\psi_1^{(n)}(\cdot,u), \psi_2^{(n)}(\cdot,u)))$ be
the solution to~\eqref{eq:Riccati-phi-psi-1-1}-\eqref{eq:Riccati-phi-psi-1-3} with $\mathcal{A}=\mathcal{A}^{(n)}$. Then 
$$\lim_{n\rightarrow \infty} \sup_{t\in [0,T]} | \Phi^{(n)}(t,u) - \Phi(t,u) | = 0$$
and 
$$\lim_{n\rightarrow \infty} \Big(\sup_{t\in [0,T]} \| \psi_1^{(n)}(t,u) - \psi_1(t,u) \|_{H } +  \sup_{t\in [0,T]} \| \psi_2^{(n)}(t,u) - \psi_2(t,u) \|\Big)= 0\,,$$
\end{proposition}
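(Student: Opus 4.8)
The plan is to resolve the three components of the system in the order $\psi_1^{(n)}\to\psi_1$, $\psi_2^{(n)}\to\psi_2$, $\Phi^{(n)}\to\Phi$, exploiting that the coupling in~\eqref{eq:Riccati-phi-psi-1-1}--\eqref{eq:Riccati-phi-psi-1-3} is one-directional: $\psi_1$ enters $R$ only through the quadratic term $-\tfrac12 D^{1/2}h\otimes D^{1/2}h$, and $\psi_2$ enters $\Phi$ only through $F$. First I would treat $\psi_1$. A direct computation with the resolvent identity gives $(\mathcal{A}^{(n)})^{*}=n\mathcal{A}^{*}(nI-\mathcal{A}^{*})^{-1}$, i.e.\ $(\mathcal{A}^{(n)})^{*}$ is the Yosida approximation of $\mathcal{A}^{*}$, and the semigroup it generates is $S^{(n)*}(t)=(\E^{t\mathcal{A}^{(n)}})^{*}$. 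As in the proof of Proposition~\ref{prop:existence-mild-sol} we have $\psi_1^{(n)}(t,u)=-\I S^{(n)*}(t)(\I u_1)$ and $\psi_1(t,u)=-\I S^{*}(t)(\I u_1)$. The standard approximation theorem for Yosida approximations, applied to $\mathcal{A}^{*}$ (see~\cite[Chapter II]{EN00}), yields $S^{(n)*}(t)x\to S^{*}(t)x$ for every $x\in H$, uniformly for $t\in[0,T]$; hence
\begin{align*}
\eps_n\df \sup_{t\in[0,T]}\norm{\psi_1^{(n)}(t,u)-\psi_1(t,u)}_H\longrightarrow 0.
\end{align*}
The same theory provides $M\geq 1$ and $w\in\MR$ with $\norm{S^{(n)*}(t)}_{\cL(H)}\leq M\E^{2|w|t}$ for all $n$ large and $t\in[0,T]$, so that the family $(\psi_1^{(n)})_n$ together with $\psi_1$ is uniformly bounded on $[0,T]$.

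Next, for $\psi_2$ I would subtract the two integral equations and split $R=\tilde R-\tfrac12 D^{1/2}(\cdot)\otimes D^{1/2}(\cdot)$ to obtain
\begin{align*}
\psi_2^{(n)}(t)-\psi_2(t)&=\int_0^t\big(\tilde R(\psi_2^{(n)}(s))-\tilde R(\psi_2(s))\big)\D s\\
&\quad-\tfrac12\int_0^t\big(D^{1/2}\psi_1^{(n)}(s)\otimes D^{1/2}\psi_1^{(n)}(s)-D^{1/2}\psi_1(s)\otimes D^{1/2}\psi_1(s)\big)\D s.
\end{align*}
For the first integral I use that $\tilde R$ is locally Lipschitz on $\cHplus$ by~\cite[Lemma 3.2 and Remark 3.4]{CKK20}, with a Lipschitz constant $L$ valid on the common bounded set in which all $\psi_2^{(n)}(s)$ and $\psi_2(s)$ lie. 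For the second integral I use the (real-)bilinearity of $(c,d)\mapsto D^{1/2}c\otimes D^{1/2}d$ together with the identity $a\otimes a-b\otimes b=(a-b)\otimes a+b\otimes(a-b)$ and $\norm{c\otimes d}=\norm{c}_H\norm{d}_H$, which bounds the integrand by $\norm{D^{1/2}}_{\cL(H)}^2(\norm{\psi_1^{(n)}(s)}_H+\norm{\psi_1(s)}_H)\eps_n\leq C\eps_n$, uniformly in $s\in[0,T]$. Writing $g(t)=\norm{\psi_2^{(n)}(t)-\psi_2(t)}$ this yields $g(t)\leq L\int_0^t g(s)\,\D s+CT\eps_n$, and Gr\"onwall's inequality gives $\sup_{t\in[0,T]}g(t)\leq CT\E^{LT}\eps_n\to 0$.

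Finally, since $\Phi^{(n)}(t,u)=\int_0^tF(\psi_2^{(n)}(s,u))\,\D s$ and $\Phi(t,u)=\int_0^tF(\psi_2(s,u))\,\D s$, and $F$ is Lipschitz on the same bounded set (again by the estimates of~\cite[Lemma 3.3]{CKK20}), the uniform convergence of $\psi_2^{(n)}$ immediately gives $\sup_{t\in[0,T]}|\Phi^{(n)}(t,u)-\Phi(t,u)|\leq T\,L_F\sup_{t\in[0,T]}g(t)\to 0$, where $L_F$ is the Lipschitz constant of $F$ on that set.

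The step I expect to be the main obstacle is securing the \emph{uniform-in-$n$} a priori bound on $\psi_2^{(n)}$ (and on $\norm{S^{(n)*}(t)}_{\cL(H)}$), since it is exactly this bound that confines all the $\psi_2^{(n)}$ to a single bounded set on which $\tilde R$ and $F$ admit common Lipschitz constants; once this is in place the Gr\"onwall step is routine. To obtain it I would re-run the monotone-approximation and growth-bound argument from the proof of Proposition~\ref{prop:existence-mild-sol} with $\mathcal{A}$ replaced by $\mathcal{A}^{(n)}$, using that the inhomogeneity $\tfrac12 D^{1/2}S^{(n)*}(t)(\I u_1)\otimes D^{1/2}S^{(n)*}(t)(\I u_1)$ is bounded uniformly in $n$ thanks to the uniform semigroup bound established in the first step.
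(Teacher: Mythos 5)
Your proposal is correct and follows the same skeleton as the paper's proof --- resolve $\psi_1$ first via convergence of the Yosida approximation (including the observation that $(\mathcal{A}^{(n)})^{*}$ is the Yosida approximation of $\mathcal{A}^{*}$), then $\psi_2$, then $\Phi$ --- but it differs in the middle step. The paper disposes of the convergence $\psi_2^{(n)}\to\psi_2$ by citing the continuous-dependence theorem \cite[Chapter 6, Theorem 3.4]{Mar76} for ODEs with quasi-monotone right-hand sides, whereas you prove it by hand: subtract the integral equations, use the local Lipschitz continuity of $\tilde R$ and the bilinearity of the tensor term, and close with Gr\"onwall. Your version has the merit of making explicit the one nontrivial ingredient that the paper's citation leaves implicit, namely the uniform-in-$n$ a priori bound on $\sup_{t\in[0,T]}\|\psi_2^{(n)}(t,u)\|$ needed to place all trajectories in a single bounded set carrying a common Lipschitz constant for $\tilde R$ and $F$; your plan for obtaining it --- re-running the domination argument $\|\psi_2^{(n)}(t,u)\|\leq\|\psi_2^{(n,1)}(t,u)\|$ from the proof of Proposition~\ref{prop:existence-mild-sol} with the inhomogeneity $\tfrac12 D^{1/2}S^{(n)*}(t)(\I u_1)\otimes D^{1/2}S^{(n)*}(t)(\I u_1)$ controlled by the uniform (in $n$) exponential bound on $\|S^{(n)*}(t)\|_{\cL(H)}$ --- is sound. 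The trade-off is the usual one: the paper's route is shorter and delegates the stability analysis to a general theorem, while yours is self-contained, more elementary, and yields an explicit rate $\sup_t\|\psi_2^{(n)}(t,u)-\psi_2(t,u)\|\lesssim \E^{LT}\sup_t\|\psi_1^{(n)}(t,u)-\psi_1(t,u)\|_H$ in terms of the semigroup approximation error.
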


\begin{proof}
The uniform convergence of $\psi_1^{(n)}(\cdot,u)$ to $\psi_1(\cdot,u)$ on $[0,T]$ is a well-known property of the
Yosida approximation, see, e.g.~\cite[Proof of Theorem I.3.1]{Paz83}. 
Once this is established, the uniform convergence of
$\psi_2^{(n)}(\cdot,u)$ to $\psi_2(\cdot,u)$ follows from~\cite[Chapter 6,
Theorem 3.4]{Mar76}. 
The uniform convergence of $\Phi^{(n)}(\cdot,u)$ to $\Phi(\cdot,u)$
follows from the uniform convergence of $\psi_i^{(n)}(\cdot, u)$ to
$\psi_i(\cdot, u)$, $i\in \{1,2\}$. Hence the statement of the proposition is proved.
\end{proof}
With Proposition~\ref{prop:uniform_conv_Yosida_Riccati} and classical
stochastic calculus we can now prove Theorem \ref{thm:joint_process_affine}:
\begin{proof}[Proof of Theorem~\ref{thm:joint_process_affine}]
Let $T\geq 0$ and $u=(u_{1},u_{2})\in \I H\times\cHplus$ be
arbitrary. Moreover, let
$(\Phi^{(n)}(\cdot,u), \Psi^{(n)}(\cdot, u))$, $n\in\MN$, be the solution to \eqref{eq:Riccati-phi-psi-1-1}-\eqref{eq:Riccati-phi-psi-1-3} 
with $\mathcal{A}=\mathcal{A}^{(n)}$ (the $n^{\text{th}}$ Yosida approximation). Note that as $\mathcal{A}^{(n)}$ is bounded, $\Psi^{(n)}(\cdot,u)=(\psi_{1}(\cdot,u),\psi_{2}(\cdot,u))$ is differentiable. 
Define the function $f_u^{(n)}(t,y,x)\colon [0,T] \times H \times \cHplus \to \mathbb{C}$ as follows 
\begin{align*}
f_u^{(n)}(t,y,x) = \E^{-\Phi^{(n)}(T-t,u)+\langle
  y,\psi_{1}^{(n)}(T-t, u)\rangle_{H}-\langle x, \psi_{2}^{(n)}(T-t,u)\rangle}\,.
\end{align*}
Observe that $f_u^{(n)} \in C_b^{1,2,1}([0,T] \times  H \times \cHplus)$ and it holds 
\begin{align}\label{eq:time-dependent-1}
  &\frac{\partial}{\partial t}f^{(n)}_{u}(t,y,x)\nonumber\\
  &\quad=\Big(\frac{\partial \Phi^{(n)}}{\partial
                          t}(T-t,u)-\langle y,\frac{\partial \psi^{(n)}_{1}}{\partial
                          t}(T-t,u)\rangle_{H}+\langle x,\frac{\partial \psi^{(n)}_{2}}{\partial t}(T-t,u)\rangle
                                 \Big)f^{(n)}_{u}(t,y,x)\nonumber\\
  &\quad = \left( F(\psi^{(n)}_{2}(T-t,u))-\langle
                                                       y,(\cA^{(n)})^*\psi_{1}^{(n)}(T-t,u)\rangle_{H}\right. \nonumber\\
  &\quad \qquad +\left.\langle x, R(\psi_{1}^{(n)}(T-t,u),\psi_{2}^{(n)}(T-t,u))\rangle\right) f^{(n)}_{u}(t,y,x)\,.
\end{align}
 As before we write $K\colon \cH\times \cH\rightarrow \R$ for the function
 $K(u,v) = \E^{-\langle u,v\rangle}-1 + \langle \chi(u),v\rangle$ and also
 $\tilde{K}\colon \cH\times \cH\rightarrow \R$ for $\tilde{K}(u,v) = \E^{-\langle u,v\rangle}-1 + \langle u,v\rangle$. Then applying the It\^o formula to $(f^{(n)}_{u}(t,Y^{(n)}_{t},X_{t}))_{0\leq t \leq T}$, yields \allowdisplaybreaks
\begin{align}
&f^{(n)}_{u}(t,Y_{t}^{(n)},
  X_{t})\nonumber\\
  &\quad =f^{(n)}_{u}(0,Y_0,X_0)+\int_{0}^{t}\frac{\partial}{\partial
          t}f^{(n)}_{u}(s,Y_{s}^{(n)},X_{s-})\D s\nonumber\\
                     &\qquad-\int_{0}^{t}f^{(n)}_{u}(s,Y^{(n)}_{s},X_{s-})\langle b+B(X_{s-}), \psi_2^{(n)}(T-s,u)\rangle
                     \D s\nonumber\\
                   &\qquad +\int_{0}^{t}f^{(n)}_{u}(s,Y^{(n)}_{s},X_{s-})\langle \cA^{(n)} Y^{(n)}_{s},\psi_{1}^{(n)}(T-s,u)\rangle_{H}\D s \nonumber\\
                &\qquad + \tfrac{1}{2}\int_{0}^{t}f^{(n)}_{u}(s,
                                       Y^{(n)}_{s},X_{s-})\langle X_{s-}^{1/2}QX_{s-}^{1/2},
                                       \psi^{(n)}_{1}(T-s, u)\otimes \psi^{(n)}_{1}(T-s, u) \rangle  \D s \nonumber\\
                                                          &\qquad
                                                            +\int_0^t\int_{\cHpluso}
                                                            f^{(n)}_{u}(s,Y_{s}^{(n)},X_{s-})
                                                            K(\xi,
                                                            \psi_2^{(n)}(T-s,u))
                                                            M(X_{s}, \D \xi)\D s\nonumber\\
                                     &\qquad +\int_{0}^{t} f^{(n)}_{u}(s,Y^{(n)}_{s},X_{s-})\langle \psi_1^{(n)}(T-s, u),X_{s-}^{1/2} \D W^{Q}_{s}\rangle_{H}
         \nonumber\\    
                   &\qquad  +\int_0^t\int\limits_{\cHpluso}
                     f^{(n)}_{u}(s,Y_{s}^{(n)},X_{s-})\tilde{K}(\xi,
                     \psi_2^{(n)}(T-s,u))(\mu^X(\D s, \D\xi) - M(X_{s}, \D
                     \xi)\D s)\nonumber\\
                   &\qquad - \int_0^t f^{(n)}_{u}(s,Y^{(n)}_{s},X_{s-})
                     \langle \psi_2^{(n)}(T-s,u), \D J_{s}\rangle\,.                     
\end{align}
From \eqref{eq:time-dependent-1}, we infer 
\begin{align}
&f^{(n)}_{u}(t,Y_{t}^{(n)},
  X_{t})\nonumber\\
                                     &\quad =\int_{0}^{t}
                                       f^{(n)}_{u}(s,Y^{(n)}_{s},X_{s-})\langle
                                       \psi_1^{(n)}(T-s, u),X_{s-}^{1/2} \D W^{Q}_{s}\rangle_{H}
         \nonumber\\    
                   &\qquad  +\int_0^t\int\limits_{\cHpluso}
                     f^{(n)}_{u}(s,Y_{s}^{(n)},X_{s-}) \tilde{K}(\xi,
                     \psi_2^{(n)}(T-s,u))(\mu^X(\D s, \D\xi) - M(X_s, \D
                     \xi)\D s)\,\nonumber\\
                   &\qquad  - \int_0^t f^{(n)}_{u}(s,Y^{(n)}_{s},X_{s-})
                     \langle \psi_2^{(n)}(T-s,u),  \D J_s\rangle\,.
\end{align}
We hence conclude that the process
  $f_{u}^{(n)}(t, Y_{t}^{(n)},X_{t})$, $t\in [0,T]$
is a local martingale. Furthermore, since it is bounded on $[0,T]$, it is a martingale
and it holds
\begin{align*}
   \EX{\E^{\langle Y^{(n)}_{T}, u_{1}\rangle_{H}-\langle X_{T},u_{2}\rangle}}
   &=\EX{\E^{-\Phi^{(n)}(T,u)+\langle Y_{0}^{(n)},\psi_{1}^{(n)}(T,u)\rangle_{H}-\langle X_{0},
     \psi_{2}^{(n)}(T,u)\rangle}}\\
   &=\E^{-\Phi^{(n)}(T,u)+\langle
     y,\psi_{1}^{(n)}(T,u)\rangle_{H}-\langle x, \psi_{2}^{(n)}(T,u)\rangle}.  
\end{align*}
Now taking limits for $n\rightarrow \infty$,
envoking~\eqref{eq:Yosida_OU_converges} and
Proposition~\ref{prop:uniform_conv_Yosida_Riccati} and since $T\geq
0$ was arbitrary, we conclude the proof. 
\end{proof}

\section{Examples}\label{sec:examples} 
In this section we discuss several examples that are included in our class of joint stochastic volatility
models with affine pure-jump variance. In all the examples we assume that the
first component $Y$ is modeled in the abstract setting of
Definition~\ref{def:joint_model}, that means we do not specify $Q$ or $\cA$
any further, however we stress here that the HJMM modeling framework as described
in \cite{Fil01, BK14}, where $H$ is the Filipovi\'c space and
$\cA=\partial/\partial x$, serves as the main example. Thus our focus here is
on correct specifications of the
parameter set $(b,B,m,\mu)$ and the initial value $X_{0}=x\in\cHplus$ such
that Assumption~\ref{def:admissibility} holds and the associated process
$(X_{t})_{t\geq 0}$ satisfies Assumption~\ref{assumption:cadlag-paths} as
well as the joint process $(Y,X)$ satisfies Assumption~\ref{def:joint-assumption}.\par{} 
In Section~\ref{sec:operator-valued-bns} we show that an
Ornstein-Uhlenbeck process driven by a L\'evy subordinator in $\cHplus$ is
included in our model class for the variance process $X$, which is implied by the parameter choice $\mu=0$. Consequently, in
Section~\ref{sec:compare_BRS} we conclude that our class of stochastic
volatility models extends the infinite-dimensional lift of the BNS stochastic
volatility model introduced in \cite{BRS15}. In the subsequent examples we
focus on variance processes admitting for state-dependent jump intensities. Indeed, in
Section~\ref{sec:state-depend-stoch-simple} we present a variance process $X$
which is essentially one-dimensional as the process evolves along a fixed
vector $z\in\cHplus$. In Section~\ref{sec:state-depend-stoch-fixedONB} we
consider a truly infinite-dimensional variance process $X$. However, to ensure
that Assumption~\ref{def:joint-assumption} is satisfied, we assume that both
$Q$ and $X_t$, $t\geq 0$, are diagonizable with respect to the same fixed orthonormal
basis. We close this section with Section~\ref{sec:state-depend-stoch-general} in which we
show the benefits of the model discussed in
Remark~\ref{rem:joint_model_alt}, which does not require
Assumption~\ref{def:joint-assumption} and thus allows for a more general variance process.
\subsection{The operator-valued BNS SV model}\label{sec:operator-valued-bns}
In~\cite{BRS15} the authors introduced an operator-valued volatility model
that is an extension of the finite-dimensional model introduced
in~\cite{BNS06} (and thus they named it the operator-valued BNS SV model). In
their model, it is assumed that the volatility process $X$ is driven by a
L\'evy process $(L_t)_{t\geq 0}$. In order to ensure that $X$ is
positive, they assume $t \mapsto L_t$ is almost surely increasing with respect
to $\cH^+$, i.e. that $L$ is an $\cH^+$\emph{-subordinator}. This holds if and only if for any fixed $t\geq 0$ we have $\MP(L_t\in \cH^+)=1$, (see also~\cite[Proposition 9]{PR03}). Roughly speaking, the model considered in~\cite{BRS15} amounts to taking $\mu\equiv 0$ in our setting (i.e, to considering a stochastic volatility model $Z=(Y, X)$ conform Definition \ref{def:joint_model} with parameters $(b,B, m,0,Q,A)$). Indeed, in Subsection~\ref{sec:compare_BRS} below we demonstrate that the model introduced in~\cite{BRS15} is fully contained in our setting. \par 
First, however, we show for this stochastic volatility model that the
characteristic function of $Y_t$, $t \in [0,T]$, can be made explicit up to
the Laplace exponent of the driving L\'evy subordinator, see Proposition~\ref{prop:BNS-SV} below. 

\begin{proposition}\label{prop:BNS-SV}
Let $(b,B,m,0)$ satisfy Assumption~\ref{def:admissibility} and let $X$ be the
associated affine process with $X_{0}=x\in\cHplus$. Moreover, let
$Q\in\cL_{1}(H)$ be positive and self-adjoint such that
Assumption~\ref{def:joint-assumption} holds and $\cA\colon \dom(\cA)\subseteq
H\to H$ be the generator of the strongly continuous semigroup $(S(t))_{t\geq
  0}$. Then for every $y\in H$, the mild solution $Y$ of \eqref{eq:Y} exists
and for all $v_1\in H$ and $t\geq 0$ it holds that
\begin{align}\label{eq:affine-transform-levy}
\EX{\E^{\I \langle Y_{t},v_{1}\rangle_{H}}}&=\exp \left(\I\langle y, S^{*}(t)v_{1}\rangle_{H}\right)\nonumber\\
  &\quad\times\exp
    \left(-\int_{0}^{t}\varphi_{L}\left(\tfrac{1}{2}\int_{0}^{s}\e^{(s-\tau)B^*}
    (D^{1/2}S^{*}(\tau) v_{1})^{\otimes 2}\D\tau\right)\D s\right)\nonumber\\
  &\quad\times\exp\left(-\tfrac{1}{2}\langle
    x,\int_{0}^{t}\e^{\tau B^*} (D^{1/2}S^{*}(t-\tau)v_1)^{\otimes 2 }\D \tau\rangle\right),
\end{align}
where $\varphi_{L}\colon \cH\to \MC$ denotes the Laplace exponent of the L\'evy
process $L$ with characteristics $(b,0,m)$ and is given by
\begin{align}\label{eq:Laplace-Levysubordinator}
  \varphi_{L}(u)= \langle b, u\rangle-\int_{\cHpluso}\E^{-\langle \xi,
  u\rangle}-1+\langle \chi(\xi), u\rangle\, m(\D \xi)\, ,\quad u\in \cHplus.   
\end{align}
\end{proposition}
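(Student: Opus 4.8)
The plan is to derive \eqref{eq:affine-transform-levy} as a direct specialisation of Theorem~\ref{thm:joint_process_affine} (equivalently Corollary~\ref{cor:solution-weak-strong}) to the case $\mu\equiv 0$, $u_1=\I v_1$, and $u_2=0$, in which the generalised Riccati equations \eqref{eq:Riccati-phi-psi-1-1}--\eqref{eq:Riccati-phi-psi-1-3} can be solved in closed form. Existence of the mild solution $Y$ is guaranteed by Lemma~\ref{lem:integrand}. Since $(b,B,m,0)$ satisfies Assumption~\ref{def:admissibility} and Assumption~\ref{def:joint-assumption} holds, Theorem~\ref{thm:joint_process_affine} applies (with $D$ entering through~\eqref{eq:R-intro}) and, using $\langle Y_t,\I v_1\rangle_H=\I\langle Y_t,v_1\rangle_H$, yields
\begin{align*}
\EX{\E^{\I\langle Y_t,v_1\rangle_H}}=\E^{-\Phi(t,(\I v_1,0))+\langle y,\psi_1(t,(\I v_1,0))\rangle_H-\langle x,\psi_2(t,(\I v_1,0))\rangle}.
\end{align*}
It therefore remains to identify the three resulting factors with the three exponentials in \eqref{eq:affine-transform-levy}.

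For $\psi_1$ I would invoke the explicit solution computed in the proof of Proposition~\ref{prop:existence-mild-sol}, namely $\psi_1(t,(u_1,u_2))=-\I S^*(t)(\I u_1)$; with $u_1=\I v_1$ this gives $\psi_1(t,(\I v_1,0))=\I S^*(t)v_1$, so that $\langle y,\psi_1(t,(\I v_1,0))\rangle_H=\I\langle y,S^*(t)v_1\rangle_H$, reproducing the first factor. For $\psi_2$ the key simplification is that $\mu\equiv 0$ removes the jump integral from $R$ in \eqref{eq:R-intro}, leaving $R(h,u)=B^*(u)-\tfrac12 D^{1/2}h\otimes D^{1/2}h$; since the outer-product term depends only on $h=\psi_1$, which is already known, equation \eqref{eq:Riccati-phi-psi-1-3} becomes a \emph{linear} inhomogeneous ODE in $\psi_2$ on $\cH$, which I would solve by variation of constants using the exponential $\E^{tB^*}$ (well defined since $B^*\in\cL(\cH)$). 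Here the one genuinely delicate point is a sign: because the outer product is bilinear over the complexification, $(\I w)^{\otimes 2}=-w^{\otimes 2}$ for $w\in H$, so with $w(\tau)=D^{1/2}S^*(\tau)v_1$ the inhomogeneity $-\tfrac12\big(D^{1/2}\psi_1(\tau)\big)^{\otimes 2}$ equals $+\tfrac12\, w(\tau)^{\otimes 2}\in\cHplus$. The solution with $\psi_2(0)=0$ is thus
\begin{align*}
\psi_2(t,(\I v_1,0))=\tfrac12\int_0^t \E^{(t-\tau)B^*}\big(D^{1/2}S^*(\tau)v_1\big)^{\otimes 2}\,\D\tau,
\end{align*}
and the substitution $\tau\mapsto t-\tau$ turns $-\langle x,\psi_2(t,(\I v_1,0))\rangle$ into the third factor of \eqref{eq:affine-transform-levy}. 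By uniqueness in Proposition~\ref{prop:existence-mild-sol} this formula \emph{is} the mild solution, so in particular $\psi_2$ stays in $\cHplus$.

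Finally, for $\Phi$ I would integrate \eqref{eq:Riccati-phi-psi-1-1} to obtain $\Phi(t,(\I v_1,0))=\int_0^t F(\psi_2(s,(\I v_1,0)))\,\D s$, and observe that comparing \eqref{eq:F} with \eqref{eq:Laplace-Levysubordinator} shows $F$ coincides exactly with the Laplace exponent $\varphi_L$ of the $\cHplus$-subordinator $L$ with characteristics $(b,0,m)$. Substituting the closed form for $\psi_2(s,(\I v_1,0))$ then reproduces the second factor, completing the identification. As this proposition is essentially a corollary of Theorem~\ref{thm:joint_process_affine}, I do not expect a substantial obstacle: the only steps requiring care are the complexification sign bookkeeping noted above and the change of variables, both of which are routine.
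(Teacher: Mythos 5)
Your proposal is correct and follows essentially the same route as the paper: specialise Corollary~\ref{cor:solution-weak-strong} to $\mu\equiv 0$, note $\psi_1(t,(\I v_1,0))=\I S^*(t)v_1$, solve \eqref{eq:Riccati-phi-psi-1-3} by variation of constants (with exactly the sign flip $(\I w)^{\otimes 2}=-w^{\otimes 2}$ you flag), and identify $F$ with $\varphi_L$. The only step you omit is the one-line verification that Assumption~\ref{assumption:cadlag-paths} holds --- since $\mu\equiv 0$ the variance process is L\'evy-driven, so Lemma~\ref{prop:cadlag-version} applies --- which is needed before the joint model of Definition~\ref{def:joint_model} and hence Theorem~\ref{thm:joint_process_affine} can be invoked.
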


\begin{proof}
  The admissible parameter set $(b,B,m,0)$ corresponds to the solution $X$ of
  a linear stochastic differential equation driven by a L\'evy process
  $(L_{t})_{t\geq 0}$ with characteristics $(b,0,m)$. It is easy to see that $X$ has c\`adl\`ag
  paths and hence Assumption~\ref{assumption:cadlag-paths} is satisfied. Thus
  we are in the situation of Corollary~\ref{cor:solution-weak-strong} and
  conclude that the affine transform formula~\eqref{eq:affine-formula-Y} holds
  with
  $(\Phi(\cdot,v),(\psi_{1}(\cdot,v),\psi_{2}(\cdot,v)))$ being the mild
  solution to the generalised Riccati equations associated with $(b,B,m,0)$
  and initial value $v=(v_{1},0)$ for $v_{1}\in H$. Hence, it is left to show that
  the solutions have the explicit form as indicated by
  formula~\eqref{eq:affine-transform-levy}. Indeed, observe that the unique mild solution to
  equation~\eqref{eq:Riccati-phi-psi-1-2} is given by $\psi_{1}(t,(v_1,0))=\I
  S^{*}(t)v_{1}$. Then inserting $\psi_{1}(\cdot,(v_{1},0))$ into
  \eqref{eq:Riccati-phi-psi-1-3} and recalling that $\mu=0$ yields
\begin{align*}
  \frac{\partial \psi_{2}}{\partial s}(s,(v_1,0))
  &=B^{*}(\psi_{2}(s,(v_1,0)))
   +\tfrac{1}{2}D^{1/2}
S^{*}(t) v_{1}\otimes
    D^{1/2} S^{*}(t) v_{1}\,.
\end{align*}
By the variation of constant formula and recalling that
$\psi_{2}(0,(v_{1},0))=0$, we conclude that the unique solution
$\psi_{2}(\cdot,(v_{1},0))$ is given by
\begin{align*}
 \psi_{2}(t,(v_1,0))&=\tfrac{1}{2}\int_{0}^{t}\e^{(t-s)B^*}\big(D^{1/2}
                      S^{*}(s)v_{1}\otimes D^{1/2}S^{*}(s)v_{1}\big)\D s\\
  &=\tfrac{1}{2}\int_{0}^{t}\e^{\tau B^*}\big(D^{1/2}
    S^{*}(t-\tau)v_{1}\otimes D^{1/2}S^{*}(t-\tau)v_{1}\big)\D \tau.  
\end{align*}
Lastly, by inserting $\psi_{2}(\cdot,(v_{1},0))$ into
\eqref{eq:Riccati-phi-psi-1-1} and since $F$ is a continuous function,
integrating \eqref{eq:Riccati-phi-psi-1-1} with respect to $t$ gives
\begin{align*}
 \Phi(t,(v_1,0))&=\int_{0}^{t} \Big(\langle b,\psi_{2}(s,(v_1,0))\rangle\\
&\qquad  -\int_{\cHpluso}\E^{-\langle
  \xi,\psi_{2}(s,(v_1,0))\rangle}-1+\langle \chi(\xi),
                                                                             \psi_{2}(s,(v_1,0))\rangle
                                                                             m(\D
                                                                             \xi)\Big)\D
                                                                             s\\
                &=\int_{0}^{t}\varphi_{L}(\psi_{2}(s,(v_1,0))\D s.                       
\end{align*}
Now, by inserting those formulas of
$\Phi(t,(v_{1},0))$, $\psi_{1}(t,(v_{1},0))$ and
$\psi_{2}(t,(v_{1},0))$ into~\eqref{eq:affine-formula-Y} we
obtain the desired formula.
\end{proof}

\subsubsection{Comparison with the model introduced
  in~\texorpdfstring{\cite{BRS15}}{Benth-Rüdiger-Süß}}\label{sec:compare_BRS}

In~\cite{BRS15} the following infinite dimensional volatility model is
considered for $t\geq 0$:
\begin{equation}\label{eq:Levy_volatility}
  \begin{cases}
    \D Y_t &= \cA Y_t \D t + \sqrt{X_t} \D W^{Q}_t,\\
 \D X_t &=  B (X_t) \D t + \D L_t,
\end{cases}
\end{equation}
where $(L_t)_{t\geq 0}$ is an $\cL_2(H)$-valued L\'evy process satisfying $\MP(L_t\in \cH^+)=1$ for every $t\geq 0$. Moreover, it is assumed that $B\colon \cL_2(H)\rightarrow \cL_2(H)$ is of the form $Bv= cvc^*$ or $Bv= cv+ vc^*$ for some $c\in \cL(H)$. Finally, $\cA\colon \operatorname{dom}(\cA)\subseteq H \rightarrow H$ is assumed to be an unbounded operator generating a strongly continuous semigroup
and $(W_t)_{t\geq 0}$ is assumed to be a $H$-valued Brownian motion which (at least, in the part of~\cite{BRS15} involving the affine property of $(Y,X)$) is assumed to be independent of $(L_t)_{t \geq 0}$ and with a covariance operator $Q$ that satisfies Assumption~\ref{def:joint-assumption}.\par 

In this section we show that the joint volatility model~\eqref{eq:Levy_volatility} is a special case of our model in the case that $\mu\equiv 0$, more specifically, that~\cite[Proposition 3.2]{BRS15} is a special case of Proposition~\ref{prop:BNS-SV} above. To this end, we first remark that if $\gamma \in \cL_2(H)$, $C\in\cL_1(\cL_2(H))$, and $\eta \colon \cB(\cL_2(H)) \rightarrow [0,\infty]$ are the characteristics of $L$, then $C|_{\cH}\equiv 0$ 
thanks to~\cite[Proposition 2.10]{BRS15}. Moreover, in view of Lemma~\ref{lem:Levy_self-adjoint_char}, we have that $\gamma \in \cH$, $C=0$, and $\supp(\eta)\subseteq \cH$ (this answers an open question in~\cite{BRS15}: see the discussion prior to Proposition 2.11 in that article). Finally, it is easily verified that $B(\cH)\subset \cH$ in both cases described above, so although the `ambient' space for $X$ is $\cL_2(H)$ in~\cite{BRS15}, one can, without loss of generality, take $\cH$ as ambient space for $X$. \par 
Next, note that the process $X$ in~\eqref{eq:Levy_volatility} has c\`adl\`ag paths by construction (see also Lemma~\ref{prop:cadlag-version}), so Assumption~\ref{assumption:cadlag-paths} is satisfied. It remains to verify that Assumption~\ref{def:admissibility} is met. Note that Assumption~\ref{def:admissibility}~\ref{item:affine-kernel} is immediately satisfied as $\mu\equiv 0$.
To verify that the two choices for $B$ described above satisfy Assumption~\ref{def:admissibility}~\ref{item:linear-operator}, we recall from~\cite[Lemma 2.2]{BRS15} that in these cases one has $\E^{tB}(\cH^+)\subseteq \cH^+$ for all $t\geq 0$, which, by~\cite[Theorem 1]{LemmertVolkmann:1998}, implies that $B$ is quasi-monotone. Finally, Assumptions~\ref{def:admissibility}~\ref{item:drift} and~\ref{item:m-2moment} hold due to the following result from~\cite{PR03}:\par 

\begin{theorem}
Let $(L_t)_{t\geq 0}$ be an $\cH$-valued L\'evy proces with characteristic triplet $(\gamma,C,\eta)$. Then the following two statements are equivalent:
\begin{enumerate}
 \item\label{it:Levy_in_cone} for all $t\geq 0$ we have $\MP(L_t \in \cH^+)=1$;
 \item\label{it:Levy_characteristics_on_cone} $C=0$, $\supp(\eta)\subseteq \cH^+$ and there exists an $I_{\eta}\in \cH$ such that $\xi \mapsto |\langle \chi(\xi),h\rangle|$ is $\eta$-integrable and $\int_{\cHpluso} \langle \chi(\xi),h\rangle \,\eta(\D \xi) = \langle I_{\eta}, h \rangle$ for all $h\in \cH$, and such that $\gamma - I_{\eta} \in \cH^+$.
\end{enumerate}

\end{theorem}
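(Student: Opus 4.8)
The plan is to prove both implications by reducing to the classical one‑dimensional theory of subordinators, exploiting that $\cHplus$ is a self‑dual, generating cone and that $\cH$ is separable. The guiding observation is that for every $u\in\cHplus$ the real‑valued process $(\langle L_t,u\rangle)_{t\ge 0}$ is a one‑dimensional L\'evy process whose characteristic triplet is obtained from $(\gamma,C,\eta)$ by pairing with $u$, and that, by self‑duality, $L_t\in\cHplus$ is equivalent to $\langle L_t,u\rangle\ge 0$ for all $u\in\cHplus$. This recovers, in the Hilbert‑space setting, the finite‑dimensional characterisation of subordinators relative to $\MR^d_+$.

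For \ref{it:Levy_characteristics_on_cone}$\Rightarrow$\ref{it:Levy_in_cone} I would fix $u\in\cHplus$ and analyse $\langle L_t,u\rangle$. Its Gaussian variance is $\langle Cu,u\rangle=0$; its jump measure is the pushforward of $\eta$ under $\xi\mapsto\langle\xi,u\rangle$, which is carried by $[0,\infty)$ because $\supp(\eta)\subseteq\cHplus$; and, using the $\eta$‑integrability of $\xi\mapsto\langle\chi(\xi),u\rangle$ together with $\eta(\{\|\xi\|>1\})<\infty$, its small jumps have finite variation. Hence $\langle L_t,u\rangle$ is a one‑dimensional subordinator, and undoing the $\cH$‑truncation $\chi$ shows that its drift equals $\langle\gamma-I_\eta,u\rangle$, which is nonnegative since $\gamma-I_\eta\in\cHplus$; therefore $\langle L_t,u\rangle\ge 0$ almost surely. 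To upgrade ``for each $u$'' to ``for all $u$ simultaneously'', I would fix $t$, choose a countable dense set $D\subseteq\cHplus$ (available by separability), intersect the full‑measure events over $D$, and use continuity of $u\mapsto\langle L_t,u\rangle$ to extend the inequality to all of $\cHplus$; self‑duality then gives $L_t\in\cHplus$.

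For the converse \ref{it:Levy_in_cone}$\Rightarrow$\ref{it:Levy_characteristics_on_cone}, I would again test against $u\in\cHplus$: since $L_t-L_s\stackrel{d}{=}L_{t-s}\in\cHplus$, the process $\langle L_{\cdot},u\rangle$ has nonnegative increments and is thus a subordinator. Reading off its triplet gives $\langle Cu,u\rangle=0$ for all $u\in\cHplus$, whence $C=0$ because $C\ge 0$ and $\cHplus$ is generating; it forces the jumps $\Delta L_s$ into the closed cone $\cHplus$ (again via a countable dense family of $u$), so $\supp(\eta)\subseteq\cHplus$; and it yields $\int_{\cHpluso}\langle\chi(\xi),u\rangle\,\eta(\D\xi)<\infty$ for every $u\in\cHplus$. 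Writing an arbitrary $h\in\cH$ as $h=h_+-h_-$ with $h_\pm\in\cHplus$ shows $\xi\mapsto|\langle\chi(\xi),h\rangle|$ is $\eta$‑integrable for all $h$; the functional $h\mapsto\int_{\cHpluso}\langle\chi(\xi),h\rangle\,\eta(\D\xi)$ is the pointwise limit of the bounded functionals obtained by restricting $\eta$ to $\{1/n\le\|\xi\|\le 1\}$, so Banach--Steinhaus renders it bounded and the Riesz representation theorem produces $I_\eta\in\cH$. Finally the subordinator drift equals $\langle\gamma-I_\eta,u\rangle\ge 0$ for every $u\in\cHplus$, hence $\gamma-I_\eta\in\cHplus$ by self‑duality.

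I expect the genuinely infinite‑dimensional features to be the main obstacle. Unlike $\MR^d_+$, the cone $\cHplus$ has no order unit—the identity operator is not Hilbert--Schmidt—so no single test element dominates $\|\xi\|$, and strong (Bochner) integrability $\int_{\|\xi\|\le 1}\|\xi\|\,\eta(\D\xi)<\infty$ of the small jumps may fail even though every scalar pairing is finite. This is precisely why the small‑jump drift must be encoded weakly through $I_\eta$, and it makes the truncation bookkeeping—relating the $\cH$‑valued truncation $\chi$ to the scalar truncation used to compute the one‑dimensional drift—the delicate computational point. The Banach--Steinhaus/Riesz construction of $I_\eta$ and the separability argument used to make the positivity hold simultaneously in $u$ are the two places where the passage from finite to infinite dimensions must be handled with care.
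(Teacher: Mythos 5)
Your proof is correct, but it takes a genuinely different route from the paper. The paper's own proof is essentially a citation: it checks that $\cHplus$ is a \emph{regular}, normal, proper cone in $\cH$ (regularity meaning that order-bounded increasing sequences converge, quoted from the literature) and then invokes Theorems 10 and 18 of the cited work of P\'erez-Abreu and Rocha-Arteaga, which give precisely this characterisation of cone-valued L\'evy processes for such cones. You instead give a self-contained reduction to classical one-dimensional subordinator theory: pairing with $u\in\cHplus$, using self-duality to translate $L_t\in\cHplus$ into $\langle L_t,u\rangle\geq 0$, and reconstructing the weak drift $I_\eta$ via Banach--Steinbeck\dots{} rather, Banach--Steinhaus applied to the truncated measures $\eta\vert_{\{1/n\leq\|\xi\|\leq 1\}}$ followed by Riesz representation. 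The details all check out: the truncation mismatch between $\{\|\xi\|\leq 1\}$ and $\{\langle\xi,u\rangle\leq 1\}$ is harmless because $\eta(\{\langle\xi,u\rangle>1\})<\infty$ and $\langle\xi,u\rangle\leq\|u\|$ on $\{\|\xi\|\leq 1\}$; the operator $C$ vanishes by polarisation from $\langle Cu,u\rangle=0$ on the generating cone; and the countable-dense-set argument upgrades the scalar positivity to $L_t\in\cHplus$. What each approach buys: the paper's is two lines but outsources all the analysis; yours is longer but makes explicit exactly where self-duality, separability and the generating property of $\cHplus$ enter, correctly identifies that the absence of an order unit forces the drift compensator to be defined only weakly through $I_\eta$, and (interestingly) appears not to need the regularity of the cone at all, the uniform boundedness principle doing that work instead.
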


\begin{proof}
First, note that $\cH^+$ is \emph{regular} (see, e.g., \cite[Theorem 1]{Kar59}), i.e., any sequence $(A_n)_{n\in \N}$ in $\cH$ satisfying $A_1 \leq_{\cH^+} A_2 \leq_{\cH^+} \ldots \leq_{\cH^+} A$ for some $A\in \cH$ is convergent in $\cH$. The cone is also \emph{normal}: its dual $\cH^+$ is generating for $\cH$. Thus $\cH^+$ is a regular normal proper cone in the terminology of~\cite{PR03}. Now, note that the implication ``\ref{it:Levy_in_cone}$\Rightarrow$\ref{it:Levy_characteristics_on_cone}'' follows from~\cite[Theorem 18]{PR03}, and reverse implication follows from~\cite[Theorem 10]{PR03}.
\end{proof}

\subsection{An essentially one-dimensional variance
  process}\label{sec:state-depend-stoch-simple}
We now present a simple example of a pure-jump affine process $(X_{t})_{t\geq 0}$ on
$\cHplus$ with state-dependent jump intensity. Starting from its initial
value $X_{0}=x\in\cHplus$ this process moves along a single vector
$z\in\cHpluso$ and is thus essentially one-dimensional. For this case we
specify an admissible parameter set $(b,B,m,\mu)$ such that the associated
affine process $X$ has c\`adl\`ag paths and is driven by a pure-jump process
$(J_{t})_{t\geq 0}$ with jumps of size $\xi\in (0,\infty)$ in the single
direction $z\in\cHplus$ with $\norm{z}=1$ and such that the jump-intensity depends on the
current state of the process $X$. For the sake of simplicity, we let the constant parameters $b$ and $m$ be
zero. Moreover, we shall fix the dependency structure by means of a fixed
vector $g\in\cHpluso$. We then take a measure $\eta\colon
\cB((0,\infty))\to [0,\infty)$ such that
$\int_{0}^{\infty}\lambda^{-2}\eta(\D\lambda)<\infty$ and define the vector
valued measure $\mu\colon\cB(\cHpluso)\to \cHplus$ by
\begin{align*}
\mu(A)\df g\eta(\set{\lambda\in\MRplus \colon \lambda z\in A}). 
\end{align*}
From the assumption that $\int_{0}^{\infty}\lambda^{-2}\eta(\D\lambda)<\infty$ it
follows that for every $x\in\cHplus$ the measure $M(x,\D\xi)$ on
$\cB(\cHpluso)$ defined by
\begin{align*}
M(x,\D\xi)\df\frac{\langle x,g \rangle}{\norm{\xi}^{2}}\mu(\D\xi)  
\end{align*}
is finite and thus also
\begin{align*}
\int_{\cHpluso}\langle \chi(\xi),u\rangle\frac{\langle \mu(\D\xi),
  x\rangle}{\norm{\xi}^{2}}=\int_{0}^{1}\lambda^{-1}\eta(\D\lambda)\langle
  z,u\rangle\langle g, x\rangle<\infty,\quad \forall u,x\in\cHplus.
\end{align*}
We now must find a linear operator $B\colon \cH\to \cH$ such that
\begin{align}\label{eq:example-2-quasi-monotone}
  \langle B^{*}(u),x\rangle-\int_{\cHpluso}\langle
  \chi(\xi),u\rangle\frac{\langle \mu(\D\xi), x\rangle}{\norm{\xi}^{2}}\geq 0,
\end{align}
whenever $\langle x, u\rangle=0$ for $x,u\in\cHplus$. The simplest example is
obtained by taking
\begin{align*}
B(u)\df\int_{\cHpluso}\chi(\xi) \frac{\langle u,
  \mu(\D\xi)\rangle}{\norm{\xi}^{2}},\quad u\in\cH.  
\end{align*}
From this we see that
$B$ and $\mu$ indeed satisfy condition~\eqref{eq:example-2-quasi-monotone} and conclude that the
parameter set $(0,B,0,\mu)$ is an admissible parameter set conform
Definition~\ref{def:admissibility}. Thus the existence of an associated
affine process $X$ on $\cHplus$ is guaranteed by
Theorem~\ref{thm:existence-affine-process}. Since
$\int_{\cHpluso}\norm{\xi}^{-2}\langle x, \mu(\D\xi)\rangle<\infty$
for all $x\in\cHplus$, it follows from Proposition~\ref{prop:cadlag-version} that Assumption~\ref{assumption:cadlag-paths} is satisfied as well.
It remains to ensure that Assumption~\ref{def:joint-assumption} is
satisfied. For this purpose it suffices to assume that $x$ and 
$z$ commute with $Q$. Indeed, note that for $u\in\set{x+\lambda z\colon \lambda\in
  [0,\infty)}$ we have $B(u)\in\set{\lambda z\colon \lambda\in[0,\infty)}$. Thus from
the semimartingale representation~\eqref{eq:canonical-rep-X}, we see
that $X_{t}\in \set{x+\lambda z\colon \lambda\in [0,\infty)}$ for all $t\geq 0$, that
means $X_{t}$ commutes with $Q$ for all $t\geq 0$ and therefore Assumption~\ref{def:joint-assumption} is satisfied.
\subsection{A state-dependent stochastic volatility model on a fixed
  ONB}\label{sec:state-depend-stoch-fixedONB}
In this example we specify an admissible parameter set $(b,B,m,\mu)$ giving
more general affine dynamics of the associated variance process $X$ on $\cHplus$. In the previous Section~\ref{sec:state-depend-stoch-simple} we imposed
additional commutativity assumptions on the initial value $X_{0}=x\in\cHplus$,
the jump direction $z$ and the covariance operator $Q$. In this example we
allow for a more general jump behavior, while maintaining
Assumption~\ref{def:joint-assumption}. To do so, we pick up the discussion
preceding Remark~\ref{rem:joint_model_alt} and note here that
Assumption~\ref{def:joint-assumption} is satisfied, whenever $Q$ and $X_{t}$
commute for all $t\geq 0$. Recall that $Q$ and $(X_{t})_{t\geq 0}$ commute if
and only if they are jointly diagonizable. This motivates the consideration of
a variance process $X$ that is diagonizable with respect to a fixed ONB.\\
More concretely, let $(e_{n})_{n\in\MN}$ be an ONB of eigenvectors of the
operator $Q$. We model $X$ such that $X_{t}$ $(t\geq 0)$ is diagonizable with respect to
the ONB $(e_{n})_{n\in\MN}$, i.e.
\begin{align*}
X_{t}=\sum_{i\in\MN}\lambda_{i}(t)e_{n}\otimes e_{n},\quad t\geq 0, 
\end{align*}
for the sequence of eigenvalues $(\lambda_{i}(t))_{i\in\MN}$ of $X_{t}$ in
$\ell^{+}_{2}$. Concerning the modeling of the dynamics of $(X_{t})_{t\geq
  0}$, this essentially means that we model the dynamics of the sequence of
eigenvalues $(\lambda_{i}(t))_{i\in\MN}$ in $\ell^{+}_{2}$ only.\\
We now come to a specification of the parameters $(b,B,m,\mu)$ such that
Assumption~\ref{def:admissibility} is satisfied and moreover such that $X_{t}$ is
indeed diagonizable with respect to $(e_{n})_{n\in\MN}$ for all $t\geq 0$.
Let the measure $m\colon \cB(\cHpluso)\to [0,\infty)$ be such that for $A\in \cB(\cHpluso)$ we have
\begin{align}\label{eq:example-m}
m(A)\df\sum_{n\in\MN}m_{n}(\set{\lambda\in (0,\infty)\colon \lambda (e_{n}\otimes e_{n})\in A}),
\end{align}
for a sequence $(m_{n})_{n\in\MN}$ of finite measures on $\cB((0,\infty))$
such that
\begin{align}\label{eq:example-jump-cond-m}
\sum_{n\in\MN}m_{n}((0,\infty))<\infty\quad\text{and}\quad\sum_{n\in\MN}\int_{1}^{\infty}\lambda^{2}m_{n}(\D\lambda)<\infty.  
\end{align}
Then let $\tilde{b}\in\cHplus$ be diagonizable with respect to
$(e_{n})_{n\in\MN}$ and set
\begin{align*}
b=\tilde{b}+\int_{\cHpluso}\chi(\xi)m(\D\xi)=\tilde{b}+\sum_{n\in\MN}\int_{0}^{1}\lambda\,m_{n}(\D\lambda)e_{n}\otimes
  e_{n}.  
\end{align*}
We see that $b$ and $m$ satisfy their respective conditions in Assumption~\ref{def:admissibility}.
Now, let $(g_{n})_{n\in\MN}\subseteq\cHplus$ and define
$\mu(\D\xi)\colon \cB(\cHpluso)\to \cHplus$ by
\begin{align}\label{eq:example-mu}
\mu(A)=\sum_{n\in\MN}g_{n}\mu_{n}(\set{\lambda\in (0,\infty)\colon \lambda (e_{n}\otimes e_{n})\in A}), 
\end{align}
for a sequence of finite measures $(\mu_{n})_{n\in\MN}$ on $\cB((0,\infty))$
such that
\begin{align}\label{eq:example-jump-cond-mu}
\sum_{n\in\MN}g_{n}\mu_{n}((0,\infty))\in \cHplus \quad\text{and}\quad
  \sum_{n\in\MN}\int_{0}^{1}\lambda^{-2}\mu_{n}(\D\lambda)\langle
  g_{n},x\rangle<\infty,\quad\forall x\in\cHplus.  
\end{align}
Moreover, let $G\in\cH$ be diagonizable with respect to $(e_{n})_{n\in\MN}$,
note that this implies that for any $x\in\cHplus$ that is diagonizable with
respect to $(e_{n})_{n\in\MN}$, we have that $Gx+xG^{*}$ is diagonizable with
respect to $(e_{n})_{n\in\MN}$ as well. We thus define the linear operator $B\colon \cH\to\cH$ by
\begin{align*}
B(u)=Gu+uG^{*}+\int_{\cHpluso}\chi(\xi) \frac{\langle
  \mu(\D\xi),u\rangle}{\norm{\xi}^{2}},\quad u\in\cH.
\end{align*}
Now, one can check that $B$ and $\mu$ indeed satisfy their respective
conditions in Assumption~\ref{def:admissibility}. Due to the first condition
on $m$ in~\eqref{eq:example-jump-cond-m} and the second on $\mu$
in~\eqref{eq:example-jump-cond-mu}, it follows from
Proposition~\ref{prop:cadlag-version} that
Assumption~\ref{assumption:cadlag-paths} is satisfied.\\
Again from the semimartingale representation~\eqref{eq:canonical-rep-X}
we conclude that for all $t\geq 0$ the operator $X_{t}$ is diagonizable with
respect to $(e_{n})_{n\in\MN}$ and thus Assumption~\ref{def:joint-assumption}
is satisfied as well.
\subsection{A general
  state-dependent stochastic
  volatility
  model}\label{sec:state-depend-stoch-general}
In this example we show that modeling under the alternative formulation of the
model $(Y,X)$ provided by Remark~\ref{rem:joint_model_alt} gives considerably
more freedom in the model parameter specification. Indeed, for the stochastic volatility model $(Y,X)$ given by the SDE
\begin{align*}
   \D (Y_{t},X_{t})&= \begin{bmatrix}
  0 \\
  b
\end{bmatrix}+\begin{bmatrix}
    \cA Y_{t} \\
    B(X_{t}) 
  \end{bmatrix}\, \D t+\begin{bmatrix}
    D^{1/2}X_{t}^{1/2} & 0 \\
    0 & 0 
  \end{bmatrix}\D \begin{bmatrix}
    W_{t} \\
    0
  \end{bmatrix}+\D \begin{bmatrix}
    0 \\
    J_{t}
  \end{bmatrix}\,, \quad t\geq 0\,,
\end{align*}
with $(Y_{0},X_{0})=(y,x)\in H\times \cH^+\,,$ and $W=(W_{t})_{t\geq 0}$ a
cylindrical Brownian motion, the
Assumption~\ref{def:joint-assumption} can be dropped. Therefore, every
admissible parameter set $(b,B,m,\mu)$, such that the associated affine
process $X$ satisfies Assumption~\ref{assumption:cadlag-paths} is a valid
parameter choice. To emphasize the gained flexibility, we compare it with the
example in Section~\ref{sec:state-depend-stoch-fixedONB}. For simplicity, we let
$(e_{n})_{n\in\MN}$ be some ONB of $H$ and specify $m$ and $\mu$ as in
\eqref{eq:example-m} and \eqref{eq:example-mu}, respectively, with respect to
this ONB. This means that the noise in the variance process $X$ again occurs
on the diagonal only. However, $Q$ need not be diagonizable with respect to
$(e_{n})_{\in\MN}$ and instead of taking $b$ to be diagonizable with
respect to the ONB $(e_{n})_{\in\MN}$ and $B$ of the particular form above, we
allow for a general drift $b\in\cH$ such that
$b-\int_{\cHpluso}\chi(\xi)m(\D\xi)\geq 0$. Moreover, let $C$ be a
bounded linear operator on $H$ and define $\tilde{B}\in\cL(\cH)$ by
\begin{align*}
B(u)=Cu+uC^{*}+\Gamma(u),  
\end{align*}
for some $\Gamma\in\cL(\cH)$ with $\Gamma(\cHplus)\subseteq \cHplus$ and such
that
\begin{align*}
\langle \Gamma(x), u\rangle-\int_{\cHpluso}\langle \chi(\xi), u
\rangle\frac{\langle x, \mu(\D\xi)\rangle}{\norm{\xi}^{2}}\geq 0.  
\end{align*}
We can again check, that $(b,B,m,\mu)$ satisfies the
Assumptions~\ref{def:admissibility} and the associated affine process $X$ Assumption~\ref{assumption:cadlag-paths}.
Then according to \eqref{eq:canonical-rep-X} the variance process $X$
has the representation
\begin{align*}
X_{t}= b+CX_{t}+X_{t}C^{*}+\Gamma(X_{t})+ J_{t},  
\end{align*}
which resembles the pure-jump affine dynamics of covariance processes in finite
dimensions as in \cite[equation 1.2]{CFMT11}.
\section{Conclusion and Outlook}\label{sec:conclusion}

In Section~\ref{sec:joint-volat-model} we introduce an infinite dimensional
stochastic volatility model. More specifically, we consider a process $Y$ that
solves a linear SDE in a Hilbert space $H$ with additive noise, where the
variance of the noise is dictated by a process $X$ taking values in the space
of self-adjoint Hilbert-Schmidt operators on $H$. The process $X$ is assumed
to be an affine pure-jump process that allows for state-dependent jump intensities; its
existence has been established in the previous work \cite{CKK20} under certain admissibility conditions on the parameters involved (see Assumption~\ref{def:admissibility}). \par 
In the derivation of the affine transform formula, we make use of
Hilbert valued semimartingale calculus, for this reason we
must assume that $X$ has c\`adl\`ag paths (see Assumption~\ref{assumption:cadlag-paths}). Currently, we establish existence of c\`adl\`ag paths 
under limited conditions (see Proposition~\ref{prop:cadlag-version}). Relaxing
these conditions is one of the aims of the working paper~\cite{Kar21} where
the author considers finite-dimensional approximations (in particular,
Galerkin approximations of the associated generalised Riccati equations are considered) and studies convergence of the variance process in the Skorohod topology. \par 
Having introduced the joint model, we prove that it is affine (see
Theorem~\ref{thm:joint_process_affine}). To this end, we need an additional
`commutativity'-type assumption, see
Assumption~\ref{def:joint-assumption}. This assumption is avoided by considering a slightly different model, see Remark~\ref{rem:joint_model_alt} and Subsection~\ref{sec:state-depend-stoch-general}. 
\par 
Our model extends the model introduced in~\cite{BRS15}, where the authors assume that $X$ is driven by a suitably chosen L\'evy process (see Subsection~\ref{sec:compare_BRS}). In Section~\ref{sec:examples} we also discuss other concrete examples of our model.\par 
Another way to avoid Assumption~\ref{def:joint-assumption} would be to
construct a variance process $X$ that takes values in the space of self-adjoint
\emph{trace class} operators. Indeed, in this case we can assume that the
noise $\D W^{Q}$ driving $Y$ is white (i.e., $Q$ is the identity). However, taking
the trace class operators as a state space is not trivial as this is a
non-reflexive Banach space. We aim to pursue this direction of research in a forthcoming work.\par 
Finally, in a subsequent work, we plan to consider the dynamics of forward rates in commodity markets modeled by our proposed stochastic volatility dynamics.
Then study the problem of computing option prices on these forwards. In
practice, these computations require finite-rank approximations of the
associated generalised Riccati equations as being considered in \cite{Kar21}.

\appendix
\section{Auxiliary results}\label{sec:auxiliary-results}

\begin{lemma}\label{lem:cone_characteristic_fnc}
Let $(\cH, \left\| \cdot \right\|, \langle \cdot , \cdot \rangle)$ be a
separable real Hilbert space, let $K\subseteq \cH$ be a cone such that $\cH =
K \oplus -K$ and let $\mu_1,\mu_2\colon \cB(\cH) \rightarrow \R$ be measures
such that $\supp(\mu_1),\supp(\mu_2)\subseteq K$ and $\int_{\cH} \e^{-\langle
  x,y \rangle} \,\mu_1(\D x) = \int_{\cH} \e^{-\langle x,y \rangle} \,\mu_2(\D
x)$ for all $y\in K$. Then $\mu_1 = \mu_2$.
\end{lemma}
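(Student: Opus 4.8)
The plan is to reduce the statement to the classical uniqueness of the Laplace transform in finitely many dimensions and to transport the conclusion back to $\cH$ via finite-dimensional projections. First I would exploit that $K$ is generating: since $\cH = K \oplus -K$, every $h\in\cH$ is a difference of two elements of $K$, so the linear span of $K$ equals $\cH$; by separability one may then extract a linearly independent sequence $(f_n)_{n\in\MN}\subseteq K$ whose span $\lin\{f_n : n\in\MN\}$ is dense in $\cH$. Put $V_N = \lin\{f_1,\dots,f_N\}$, let $P_N$ be the orthogonal projection onto $V_N$, and set $\nu_i^{(N)} := (P_N)_\ast \mu_i$ for $i\in\{1,2\}$. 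Because $f_1,\dots,f_N$ are linearly independent vectors of the cone $K$, the set $U_N := \{\sum_{n=1}^N \lambda_n f_n : \lambda_n > 0\}$ is the image of $(0,\infty)^N$ under a linear isomorphism $\R^N \to V_N$, hence a nonempty \emph{open} subset of $V_N$, and it is contained in $K$ since $K$ is closed under addition and positive scaling.

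The key computation is that for every $y \in V_N$ one has $\langle P_N x, y\rangle = \langle x, P_N y\rangle = \langle x, y\rangle$, so that $\int_{V_N} \e^{-\langle v, y\rangle}\,\nu_i^{(N)}(\D v) = \int_{\cH} \e^{-\langle x, y\rangle}\,\mu_i(\D x)$. For $y \in U_N \subseteq K$ the right-hand sides agree by hypothesis (and are finite), so the Laplace transforms of $\nu_1^{(N)}$ and $\nu_2^{(N)}$ coincide on the open set $U_N$ of $V_N \cong \R^N$.

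Next I would invoke finite-dimensional Laplace uniqueness. Fixing $y_0 \in U_N$, the map $z \mapsto \int \e^{-\langle v, z\rangle}\,\nu_i^{(N)}(\D v)$ is holomorphic in $z$ on the tube domain $\{y + \I t : y\in U_N,\ t\in V_N\}$, and the two transforms agree on the real open set $U_N$; by the identity theorem they agree on the whole tube. Restricting to $z = y_0 + \I t$ with $t\in V_N$ shows that the finite measures $\e^{-\langle v, y_0\rangle}\nu_i^{(N)}(\D v)$ have identical Fourier transforms, hence coincide; dividing out the strictly positive weight $\e^{-\langle v, y_0\rangle}$ gives $\nu_1^{(N)} = \nu_2^{(N)}$. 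Finally, since $\bigcup_N V_N$ is dense we have $P_N x \to x$ for every $x\in\cH$, so the cylinder sets $\{P_N^{-1}(A) : A\in\cB(V_N),\ N\in\MN\}$ generate $\cB(\cH)$; two finite Borel measures on a separable Hilbert space whose projections $(P_N)_\ast\mu_1$ and $(P_N)_\ast\mu_2$ agree for all $N$ must therefore be equal, by a $\pi$--$\lambda$ (Dynkin) argument. Hence $\mu_1 = \mu_2$.

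The main obstacle is the very first reduction: one must choose the finite-dimensional subspaces so as to \emph{simultaneously} (i) be spanned by \emph{cone} vectors, so that $K\cap V_N$ has nonempty interior and the finite-dimensional Laplace data live on an open set, and (ii) have dense union, so that the marginals determine the measure. It is precisely the hypothesis that $K$ is generating, together with the support condition $\supp(\mu_i)\subseteq K$ (which guarantees that the relevant integrals are finite), that reconciles these two requirements. The remaining ingredients---analytic continuation of the Laplace transform to recover the Fourier transform, Fourier uniqueness, and the fact that finite-dimensional marginals determine a measure on a separable Hilbert space---are standard.
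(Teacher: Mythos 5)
Your proof is correct, and its overall skeleton is the same as the paper's: reduce to finite-dimensional Laplace-transform uniqueness by exploiting that $K$ generates $\cH$, then recover $\mu_1=\mu_2$ from the finite-dimensional marginals via a Dynkin ($\pi$--$\lambda$) argument. The two implementations differ in both halves, though. For the reduction, the paper decomposes an orthonormal basis as $e_n=e_n^+-e_n^-$ with $e_n^\pm\in K$ and pushes the measures forward under $x\mapsto(\langle x,e_i^+\rangle,\langle x,e_i^-\rangle)_{i\le n}$, so the finite-dimensional picture is $\R^{2n}$ with $K$ replaced by the orthant $[0,\infty)^{2n}$; you instead project orthogonally onto $V_N=\lin\{f_1,\dots,f_N\}$ with $f_i\in K$ linearly independent, which keeps the geometry intrinsic but requires your observation that $U_N$ is open in $V_N$ so that the Laplace data live on a nonempty open set. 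For the finite-dimensional uniqueness, the paper cites a Stone--Weierstrass argument (the exponentials $\e^{-\langle\cdot,y\rangle}$ form a point-separating algebra on a compactification of the orthant), whereas you analytically continue the Laplace transform to a tube domain and invoke Fourier uniqueness via the several-variables identity theorem; both are standard and each buys roughly the same thing. One implicit assumption shared by both arguments is worth flagging: you need $\langle x,y\rangle\ge 0$ for $x\in\supp(\mu_i)$ and $y\in K$ (for finiteness and holomorphy of the transform on the tube in your case, and for the pushforward measures to be supported in the positive orthant in the paper's case). This is not literally implied by the stated hypotheses on a general generating cone, but it holds in the intended application since $\cH^+$ is self-dual.
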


\begin{proof}
Let $(e_n)_{n\in \N}$ be an orthonormal basis for $\cH$ and let $e_n^+, e_n^{-} \in K$ be such that $e_n = e_n^+ - e_n^{-}$, $n\in \N$. 
By Dynkin's lemma it suffices to prove that $\mu_1$ and $\mu_2$ coincide on sets of the type $\cap_{i=1}^{n} \{ \langle \cdot, e_i^+ \rangle \in B_i^+, \langle \cdot, e_i^{-}, \rangle \in B_i^{-}\}$, $B_1^+,B_1^-,\ldots,B_n^+,B_n^- \in \cB(\MR)$
and $n\in \N$. This implies that it suffices to prove the lemma for the case $\cH=\R^n$ and $K=[0,\infty)^{n}\subseteq \R^n$, $n \in \N$. In this case, the result follows by a standard Stone-Weierstrass argument, see, e.g.~\cite[Theorem E.1.14]{HNVW:2017}.
\end{proof}

\begin{lemma}\label{lem:Levy_self-adjoint_char}
Let $(H,\langle \cdot,\cdot \rangle)$ be a Hilbert space, let $U\subseteq H$ be a closed linear subspace and let $P_U\colon H\rightarrow U$ be the orthogonal projection of $H$ onto $U$.
Moreover, let $(L_t)_{t\geq 0}$ be a $H$-valued L\'evy process satisfying $\MP(L_1 \in U)=1$ and let $\gamma \in H$, $C\in \cL_1(H)$ and $\eta\colon \cB(H\setminus\set{0})\rightarrow [0,\infty]$ be its characteristics. In addition, let $\gamma_s\in U$, $C_s\in \cL_1(U)$ and $\eta_s\colon \cB(U\setminus\set{0})\rightarrow [0,\infty]$ be the characteristics of $L$ when interpreted as a $(U,\langle \cdot, \cdot\rangle)$-valued process. Then $\gamma = \gamma_s$, $C = C_s P_U$, and $\eta(A) = \eta_s(A\cap U)$ for all $A \in \cB(H\setminus\set{0})$. In particular, $C h = 0$ whenever $h\in U^{\perp}$ and $\supp(\eta)\subseteq U$.
\end{lemma}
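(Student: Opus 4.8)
The plan is to combine the L\'evy--It\^o decomposition of $L$ with an orthogonal projection onto $U^{\perp}$, extracting the three characteristics one at a time. Writing $\psi$ for the characteristic exponent of $L$, so that $\EX{\E^{\I\langle L_t,h\rangle}}=\E^{t\psi(h)}$ for all $t\geq 0$ and $h\in H$, I would first upgrade the hypothesis $\MP(L_1\in U)=1$ to $\MP(L_t\in U)=1$ for every $t$. Indeed, for $h\in U^{\perp}$ the assumption gives $\E^{\psi(h)}=1$, and since $s\mapsto\psi(sh)$ is continuous on the ray $\MR h\subseteq U^{\perp}$, vanishes at $s=0$, and takes values in $2\pi\I\MZ$, it vanishes identically; hence $\langle L_t,h\rangle=0$ almost surely for each fixed $t$ and each $h\in U^{\perp}$. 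Choosing an orthonormal basis of $U^{\perp}$ and passing to the c\`adl\`ag version of $L$ then yields $L_t\in U$ for all $t\geq 0$ simultaneously, almost surely, so that the $U$-valued interpretation of $L$ is well defined.

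Next I would identify the L\'evy measure through the jumps. Since $U$ is closed and $t\mapsto L_t$ is c\`adl\`ag and $U$-valued, each jump $\Delta L_t=L_t-L_{t-}$ lies in $U$ (as $L_{t-}$ is a limit of elements of $U$). Consequently the jump measure of $L$ is carried by $[0,\infty)\times U$, so its intensity $\D t\,\eta(\D\xi)$ assigns no mass to sets disjoint from $U$; that is, $\eta(A)=0$ whenever $A\cap U=\emptyset$, whence $\supp(\eta)\subseteq U$. Because the jumps of $L$ viewed in $H$ and in $U$ literally coincide, the intrinsic L\'evy measure $\eta_s$ agrees with $\eta$ on $\cB(U\setminus\{0\})$, which gives $\eta(A)=\eta_s(A\cap U)$ for all $A\in\cB(H\setminus\{0\})$.

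With $\eta$ now supported on $U$—and the two truncation functions agreeing there, since the norms coincide on $U$—the small- and large-jump integrals in the L\'evy--It\^o decompositions over $H$ and over $U$ are identical and $U$-valued. Subtracting them from $L$ leaves $\gamma t+L^{c}_t\in U$, where $L^{c}$ is the continuous Gaussian martingale part. Projecting onto $U^{\perp}$ gives $t\,P_{U^{\perp}}\gamma+P_{U^{\perp}}L^{c}_t=0$; since $P_{U^{\perp}}L^{c}_t$ is centered Gaussian, its variance must vanish, forcing $P_{U^{\perp}}L^{c}_t=0$ a.s.\ and then $P_{U^{\perp}}\gamma=0$, i.e.\ $\gamma\in U$. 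From $\langle Ch,h\rangle=t^{-1}\Var(\langle L^{c}_t,h\rangle)=0$ for $h\in U^{\perp}$ and positivity of $C$ I obtain $Ch=0$ on $U^{\perp}$; self-adjointness then gives $Ch\in U$ and $\langle Ch,g\rangle=\langle C_sh,g\rangle$ for $h,g\in U$, which together amount to $C=C_sP_U$. Finally, comparing the (now matching) $U$-valued decompositions yields $\gamma t=\gamma_s t$, hence $\gamma=\gamma_s$.

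I expect the L\'evy-measure step to be the main obstacle. One cannot deduce $\supp(\eta)\subseteq U$ from a purely Fourier-analytic comparison of $\psi$ and $\psi_s$ on $U$ alone, because for $\xi\notin U$ and $h\in U$ the pairing $\langle\xi,h\rangle=\langle P_U\xi,h\rangle$ is in general nonzero, so any off-$U$ mass of $\eta$ would still contribute to $\psi|_U$; this is precisely why the pathwise jump argument is required. Once the support of $\eta$ is pinned down, the Gaussian and drift identifications reduce to the routine projection computations indicated above.
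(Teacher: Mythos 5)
Your proof is correct, but it follows a genuinely different route from the paper's. You work pathwise through the L\'evy--It\^o decomposition: upgrade $\MP(L_1\in U)=1$ to $L_t\in U$ for all $t$ simultaneously, read off $\supp(\eta)\subseteq U$ from the fact that the jumps $\Delta L_t=L_t-L_{t-}$ lie in the closed subspace $U$, and then extract the Gaussian part and drift by projecting onto $U^{\perp}$. The paper instead gives a three-line Fourier argument: since $L_t\in U$ a.s.\ one has $\langle L_t,h\rangle=\langle L_t,P_Uh\rangle$ for \emph{every} $h\in H$, so the $H$-characteristic function at $h$ equals the $U$-valued L\'evy--Khintchine expression evaluated at $P_Uh$; rewriting that expression as an $H$-valued L\'evy--Khintchine formula with triplet $(\gamma_s,\,C_sP_U,\,\tilde\eta)$, where $\tilde\eta(A)=\eta_s(A\cap U)$, and invoking uniqueness of the characteristic triplet yields all three identities at once. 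This makes your closing caveat somewhat off the mark: you are right that comparing $\psi$ and $\psi_s$ \emph{only on $U$} could not pin down the off-$U$ mass of $\eta$, but the Fourier route does not do that --- it exploits the identity $\psi(h)=\psi_s(P_Uh)$ on all of $H$, and uniqueness of the $H$-valued triplet then forces $\supp(\eta)\subseteq U$ without any pathwise jump analysis. The trade-off is that your argument needs the L\'evy--It\^o decomposition in Hilbert space (a heavier tool) but exhibits the structural content concretely, while the paper's needs only the uniqueness of the L\'evy--Khintchine representation; your preliminary step showing $\MP(L_t\in U)=1$ for all $t$ is also a detail the paper uses implicitly without proof, so that part of your write-up is a genuine addition.
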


\begin{proof}
Define $\tilde{\eta} \colon \cB(H\setminus\set{0})\rightarrow [0,\infty]$ by $\tilde{\eta}(A) = \eta_s(A\cap U)$, $A\in \cB(H\setminus\set{0})$. Then for all $h\in H$ and $t\geq 0$ we have, using that $L_t \in U$ a.s.: 
\begin{equation*}\begin{aligned}
& \ME( \e^{i\langle L_t, h \rangle })
 =
 \ME( \e^{i\langle L_t, P_U h \rangle })
 \\ &
 = \exp\left(
  t\left(
    i\langle \gamma_s, P_U h \rangle_H 
    -
    \langle C_s P_U h, P_U h \rangle
 \right)\right)
 \\ & \quad 
 \times 
 \exp\left( t 
    \int_{U\setminus \{0\}} 
            \E^{i\langle \xi,P_U h\rangle} 
            -1
            + i\langle \xi , P_U h \rangle 1_{\{\|\xi\|<1\}} 
    \,\eta_s(\D \xi)
  \right)
\\& = \exp\left(
  t\left(
    i\langle \gamma_s, h \rangle_H 
    -
    \langle C_s P_U h, h \rangle 
    +
    \int_{H\setminus \{0\}}
            \E^{i\langle \xi,h\rangle} 
            -1
            + i\langle \xi , h \rangle 1_{\{\|\xi\|<1\}}
    \,\tilde{\eta}(\D \xi)
  \right)\right).
\end{aligned}
\end{equation*}
The result now follows from the uniqueness of the characteristic 
triplet. 
\end{proof}

\section*{Data Availability Statement}
Data sharing not applicable to this article as no datasets were generated or analysed during the current study.
\bibliographystyle{alpha}
\bibliography{literatur-proj2-final}

\end{document}